\newtheorem{thm}{Theorem}
\theoremstyle{definition}
\newtheorem{defn}{Definition}
\newtheorem{cor}{Corollary}
\newtheorem{prop}{Proposition}
\begin{document}

\title{Progress Towards  the Conjecture on APN Functions and Absolutely Irreducible Polynomials }
\author{Moises~Delgado, Heeralal~Janwa
\thanks{Department of Mathematics, University of Puerto Rico, Rio
Piedras Campus, San Juan PR, USA., \textcolor{black}{moises.delgado@upr.edu; heeralal.janwa@upr.edu}
}}
\date{{}}
\maketitle

\begin{abstract}
Almost Perfect Nonlinear (APN) functions are very useful in cryptography, when \textcolor{black}{they}  are used as
S-Boxes, because of their good resistance to differential
cryptanalysis. An APN function $f:\mathbb{F}_{2^n}\rightarrow
\mathbb{F}_{2^n}$ is called exceptional APN if it is APN on
infinitely many extensions of $\mathbb{F}_{2^n}$. Aubry, McGuire and Rodier
conjectured that the only exceptional APN functions are the Gold and
the Kasami-Welch monomial functions. They established  that a
polynomial function of odd degree is not exceptional APN provided
the degree is not a Gold number $(2^k+1)$ or a Kasami-Welch number
$(2^{2k}-2^k+1)$. When the degree of the polynomial function is a Gold
number, several partial results have been obtained \textcolor{black}{\cite{Aub,CauF,DJ, {DJDESI},Rod2}.} 
\textcolor{black}{One of the results in this article is a  proof of  the relatively primeness of the multivariate APN polynomial conjecture, in the Gold degree case.}
This helps us extend substantially previous  results. We prove that Gold degree
polynomials of the form $x^{2^k+1}+h(x)$, where
$\deg(h)$ is any odd integer (with the natural exceptions),  can not be exceptional APN.

We also show absolute irreducibility of several  classes of multivariate polynomials  over finite fields and discuss their applications.

\end{abstract}

\emph{Keywords: APN functions, exceptional APN functions, Janwa-McGuire-Wilson
conjecture, absolutely irreducible polynomials,
S-Boxes, Differential Cryptanalysis.}

\noindent {\small {\bf 2000 Mathematics Subject Classification}: 94A60, 20C05, 05B10, 11T71}

\section{\textbf{INTRODUCTION}}
Block ciphers are symmetric key algorithms for performing encryption or
decryption. Block ciphers map a block of bits to another block of
bits in such a way that it is difficult to guess the mapping. For block ciphers, the
effectiveness of the main cryptanalysis techniques can be measured by
some quantities related to the round of encryption, usually named substitution box (S-box).\\
\textcolor{black}{For differential attacks, the attacker is able to select inputs and examine outputs in an attempt
to derive the secret key, more exactly, the attacker will select pairs of inputs $x$, $y$ satisfying
a particular $a=x-y$, knowing that for that $a$ value, a particular $b=f(x)-f(y)$ value
occurs with high probability.\\
Then, one of the desired properties for an S-box to have high
resistance against differential attacks is that, given any
plaintext difference $x-y=a$, it provides a ciphertext difference
$f(x)-f(y)=b$ with small probability.}\\

\begin{defn} Let $L=\mathbb{F}_q$, with $q=2^n$ for some positive
integer $n$. A function $f:L\rightarrow L$ is said to be {\it almost
perfect nonlinear }(APN) on $L$ if for all $a,b \in L$, $a \neq 0$,
the equation
\begin{equation}
f(x+a)-f(x)=b
\end{equation}
have at most 2 solutions.
\end{defn}
\vspace{0.3cm} Equivalently, $f$ is APN if the set $\{f(x+a)-f(x):x
\in L \}$ has size at least $2^{n-1}$ for each $a\in L^{\ast}$.
Moreover, since $L$ has characteristic 2, if $x$ is a solution of the
equation (1), $x+a$ is also a solution. Then the number of solutions
\textcolor{black}{of (1)} must be an even number.\\
The best known examples of APN functions are the Gold functions
$f(x)=x^{2^k+1}$ and the Kasami-Welch functions $f(x)=x^{4^k-2^k+1}$, whose 
names are due to its exponents, the Gold and Kasami-Welch numbers respectively.
These functions are APN on any field $\mathbb{F}_{2^n}$ where $k,n$ are
\textcolor{black}{relatively prime integers}. The function $f(x)=x^{2^r+3}$
(Welch function) is also APN on $\mathbb{F}_{2^n}$, where $n=2r+1$.\\
The APN property is invariant under some transformations of
functions. A function $f:L \rightarrow L$ is linear if
$$\sum_{i=0}^{n-1}c_ix^{2^i}, \,\,\,\,\,\, c_i\in L.$$
The sum of a linear function and a constant is called an affine
function.\\
\textcolor{black}{Two functions $f$ and $g$  are called {\it extended affine equivalent}
\textcolor{black}{(EA-equivalent)}, 
 if
$f=A_1\circ g \circ A_2+A$\textcolor{black}{,} where $A_1$ \textcolor{black}{and} $A_2$ are \textcolor{black}{linear maps
and $A$ is a constant function}. They are called {CCZ-equivalence}, 
if the graph of $f$ can be obtained
from the graph of $g$ by an affine permutation.
EA equivalence is a particular case of CCZ equivalence
and two CCZ equivalent functions preserves the APN property (for
more details see \cite{Car}). In
general, proving CCZ equivalence is very difficult.}\\
\textcolor{black}{APN functions and their applications have become very
important for the mathematicians in the last years. APN functions defined over
$\mathbb{F}_{2^n}$ are related to others mathematical objects, for example
they are equivalent to binary error correcting codes $[2^n,2^n-2n-1,6]$, they are also
equivalent to a certain class of dual hyperovals in the projective geometry.}\\

Until 2006, the list of known affine inequivalent APN functions over
$K=\mathbb{F}_{2^n}$ was the families of monomial functions
$f(x)=x^d$, where the exponent $d$ is as in the following table:

\begin{table}[h]
\centering
\begin{tabular}{|c|c|c|}
  \hline
  $x^d$ & Exponent $d$ & Constraints \\
  \hline
  Gold & $2^r+1$ & $(r,n)=1$ \\
  \hline
  Kasami-Welch & $2^{2r}-2^r+1$& $(r,n)=1$,$n$ odd \\
  \hline
  Welch & $2^r+3$ & $n=2r+1$\\
  \hline
  Niho & $2^r+2^{r/2}-1$ & $n=2r+1$, $r$ even \\
       & $2^r+2^{(3r+1)/2}-1$ & $n=2r+1$, $r$ odd \\
  \hline
  Inverse & $2^{2r}-1$ & $n=2r+1$ \\
  \hline
  Dobbertin & $2^{4r}+2^{3r}+2^{2r}+2^r-1$ & $n=5r$\\
  \hline
\end{tabular}
\end{table}

\textcolor{black}{Mathematicians} conjectured that this list was complete, up to equivalence.
Motivated by this conjecture, several authors worked to find new APN
functions not equivalent to the \textcolor{black}{known} ones. In February 2006,
Y.Edel, G.Kyureghyan and A.Pott \cite{Ede} published a paper with the first
example of an APN function, that is a binomial of degree 36, \textcolor{black}{which is} not
equivalent to any of the functions appeared in the above list. The
function
$$x^3+ux^{36} \in GF(2^{10})[x]$$
where $u\in wGF(2^5)^* \cup w^2GF(2^5)^*$ and $w$ has order 3 in
 $GF(2^{10})$, is APN on $GF(2^{10})$.\\
From the emergence of this first example, there are now several more
families of APN functions inequivalent to \textcolor{black}{monomial} functions. As some
examples, Budaghyan, Carlet and Lender \cite{Bud} found the following
family of quadratic APN functions:
$$f(x)=x^{2^s+1}+wx^{2^{ik}+2^{mk+s}}$$
where $n=3k$, $(k,3)=(s,3k)=1$, $k\geq 4$, $i=sk(mod 3)$, $m=3-i$
and $w$ has the order $2^{2k}+2^k+1$.\\
Notice that, as shown in the above examples, the APN property depends on the extension degree
of $\mathbb{F}_2$. For any $t=2^r+1$ or $t=2^{2r}-2^r+1$ there exist
infinitely many values $m$ such that $(r,m)=1$. That is, any fixed
Gold or Kasami-Welch function \textcolor{black}{which} is APN on $L$ is also APN on
infinitely many extensions of $L$. Such functions are called
\textbf{exceptional} APN functions. One way to face a classification
problem of APN functions is to determine \textcolor{black}{which} APN functions are APN
infinitely often. This problem has been studied for monomials
functions by Janwa, Mc.Guire, Wilson, Jedlika, Hernando \cite{Her,Jan,Jan2} and
more recently for polynomials by \textcolor{black}{Aubry, McGuire, Rodier, Caullery, Delgado and Janwa \cite{Aub,CauF,DJ,Rod2}}.\\

\begin{defn} Let $L=\mathbb{F}_q$, $q=2^n$ for some positive
integer $n$. A function $f:L\rightarrow L$ is called
\textbf{exceptional APN} if $f$ is APN on $L$ and also on infinitely
many extensions of $L$.
\end{defn}
\vspace{0.3cm}
Aubry, McGuire and Rodier \textcolor{black}{conjectured the following in} \cite{Aub}.\\
\textbf{CONJECTURE:} Up to equivalence, the Gold and Kasami-Welch
functions are the only exceptional APN functions.\\
It has been established \cite{Aub} that a polynomial function of
odd degree is not exceptional APN \textcolor{black}{when} the function is
not a Gold function or a Kasami-Welch function. Although
there are \textcolor{black}{some} results for \textcolor{black}{the cases of non-monomial functions
which are polynomials of} Gold and Kasami-
Welch degree, these cases \textcolor{black}{remain} open. In this paper we obtain new results which 
prove that a big infinite family of Gold degree
polynomials can not be exceptional APN.

\textcolor{black}{We make substantial progress towards the resolution of this conjecture. 
One of of our main results in this article is a  proof of  the relatively primeness of the multivariate APN polynomial, in the Gold degree case (see Theorem 10).
This helps us extend substantially previous  results. In particular, we prove that Gold degree
polynomials of the form $x^{2^k+1}+h(x)$, where
$\deg(h)$ is any odd integer (with the natural exceptions),  can not be exceptional APN (section 5). We also show absolute irreducibility of several  classes of multivariate polynomials  over finite fields .
 and discuss their applications (section 5). We also give a proof of the "even case" of another theorem (section 6)}

\section{\textbf{EXCEPTIONAL APN FUNCTIONS AND THE SURFACE $\phi(x,y,z)$}}
Let $L=\mathbb{F}_q$, $q=2^n$ for some positive integer $n$. Rodier \textcolor{black}{characterized APN functions
as follows} \cite{Rod}.\\

\begin{prop} A function $f:L\rightarrow L$ is APN if and
only if \textcolor{black}{the rational points $f_q$ of the affine surface}
$$f(x)+f(y)+f(z)+f(x+y+z)=0$$
are contained in the surface
$(x+y)(x+z)(y+z)=0.$
\end{prop}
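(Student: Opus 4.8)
The plan is to translate the defining property of APN functions---that $f(x+a)+f(x)=b$ has at most two solutions for every $a\in L^{\ast}$ and every $b\in L$---into the asserted statement about $\mathbb{F}_q$-rational points. The bridge is a linear change of variables. Write $\phi(x,y,z)=f(x)+f(y)+f(z)+f(x+y+z)$. Given a point $(x,y,z)\in L^3$ with $\phi(x,y,z)=0$, set $a=x+y$ and $b=f(x)+f(y)$. Since $x+y+z=z+a$ and $L$ has characteristic $2$, the relation $\phi(x,y,z)=0$ rewrites as $f(x)+f(x+a)=f(z)+f(z+a)=b$, so that $x$ and $z$ are both solutions of $f(u)+f(u+a)=b$, and hence so are $x+a=y$ and $z+a$.

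First I would record the elementary but crucial bookkeeping on when the four solutions $x,\;y=x+a,\;z,\;z+a$ are pairwise distinct. One checks directly that $x\neq x+a$ and $z\neq z+a$ hold automatically because $a\neq 0$, while the remaining requirements $x\neq z$, $y\neq z$, $x\neq z+a$, $y\neq z+a$ reduce, after substituting $a=x+y$ and using characteristic $2$, to exactly $x+z\neq 0$ and $y+z\neq 0$. Together with $x+y=a\neq 0$ this says precisely that $(x+y)(x+z)(y+z)\neq 0$.

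Next I would run the two implications, both in contrapositive form. If $(x,y,z)$ is a rational point of the surface $\phi=0$ lying outside $(x+y)(x+z)(y+z)=0$, then by the bookkeeping step $f(u)+f(u+a)=b$ has at least the four distinct solutions $x,y,z,z+a$, so $f$ is not APN. Conversely, if $f$ is not APN, choose $a\in L^{\ast}$ and $b\in L$ for which $f(u)+f(u+a)=b$ has more than two solutions; since the solutions split into pairs $\{u,u+a\}$, their number is even, hence at least four, so in particular there are solutions $x$ and $z$ lying in two distinct pairs, i.e.\ $z\neq x$ and $z\neq x+a$. Setting $y=x+a$, the identity $x+y+z=z+a$ gives $\phi(x,y,z)=(f(x)+f(x+a))+(f(z)+f(z+a))=b+b=0$, so $(x,y,z)$ is a rational point of the surface, while $x+y=a\neq 0$, $x+z\neq 0$, and $y+z=x+a+z\neq 0$ show it is not on $(x+y)(x+z)(y+z)=0$; this contradicts the containment.

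The argument is short, and the only places that require care are the distinctness bookkeeping in the forward direction and, in the converse direction, the observation that any set of more than two solutions of $f(u)+f(u+a)=b$ must meet at least two of the pairs $\{u,u+a\}$ (equivalently, that the solution count is even, as already noted in the text). That is precisely what lets us extract $x$ and $z$ from distinct pairs, and it is the step I would be most careful to spell out explicitly.
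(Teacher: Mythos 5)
Your proof is correct: the change of variables $a=x+y$, $b=f(x)+f(y)$, the distinctness bookkeeping showing that the four solutions $x$, $y=x+a$, $z$, $z+a$ are pairwise distinct exactly when $(x+y)(x+z)(y+z)\neq 0$, and the parity argument in the converse are all sound. The paper itself gives no proof of this proposition (it is quoted from Rodier's work), and your argument is essentially the standard one used there, so there is nothing further to compare.
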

\vspace{0.3cm} Given a polynomial function $f \in L[x,y,z]$,
$\deg(f)=d$. We define:
\begin{equation}
\phi(x,y,z)=\frac{f(x)+f(y)+f(z)+f(x+y+z)}{(x+y)(x+z)(y+z)}
\end{equation}
Then $\phi$ is a polynomial over $L[x,y,z]$ of degree $d-3$. This
polynomial defines a surface $X$ in the three dimensional
affine space $L^3$.\\
It can be shown that if $f(x)=\sum_{j=0}^d a_jx_j$, then:
$$\tiny{\phi(x,y,z)=\sum_{j=3}^d a_j\phi_j(x,y,z)}$$
where
\begin{equation}
 \phi_j(x,y,z)=\frac{x^j+y^j+z^j+(x+y+z)^j}{(x+y)(x+z)(y+z)}
\end{equation}
is homogeneous of degree $j-3$.\\
From the above proposition, one can deduce  the next corollary
whose proof can be found in \cite{Rod}.\\

\begin{cor} If the polynomial function $f:L\rightarrow L$ (of degree
$d\geq 5$) is APN and the affine surface $X$
\begin{equation}
\phi(x,y,z)=\frac{f(x)+f(y)+f(z)+f(x+y+z)}{(x+y)(x+z)(y+z)}=0
\end{equation}
is absolutely irreducible, then the \textcolor{black}{projective closure of $X$,
$\overline{X}$ admits} at most $4((d-3)q+1)$ rational points.
\end{cor}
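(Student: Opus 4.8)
The plan is to combine the APN characterization of Proposition~1 with elementary point counts for plane curves. First I would show that every affine $\mathbb{F}_q$-rational point of $X$ lies on the union of the three planes $\Pi_1:\, x+y=0$, $\Pi_2:\, x+z=0$, $\Pi_3:\, y+z=0$. Indeed, if $(a,b,c)\in X(\mathbb{F}_q)$ had $(a+b)(a+c)(b+c)\neq 0$, then clearing the denominator in $\phi(a,b,c)=0$ would yield $f(a)+f(b)+f(c)+f(a+b+c)=0$ with $(a+b)(a+c)(b+c)\neq 0$, contradicting Proposition~1 since $f$ is APN. Hence the affine rational points of $X$ are contained in $\bigl(X\cap\Pi_1\bigr)\cup\bigl(X\cap\Pi_2\bigr)\cup\bigl(X\cap\Pi_3\bigr)$.

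Next, since $\deg f=d\ge 5$ the polynomial $\phi$ has degree $d-3\ge 2$, and an absolutely irreducible polynomial of degree at least $2$ has no linear factor; in particular none of $x+y$, $x+z$, $y+z$ divides $\phi$, so no plane $\Pi_i$ is a component of $X$. Substituting the defining linear relation of $\Pi_i$ into $\phi$ then produces a nonzero polynomial in two variables of degree at most $d-3$, so each $X\cap\Pi_i$ is a plane curve of degree at most $d-3$ inside $\Pi_i\cong\mathbb{A}^2$. An affine plane curve of degree $m$ over $\mathbb{F}_q$ has at most $mq$ rational points (if $k$ of its components are lines in a fixed direction they carry at most $kq$ points, while each of the $q$ lines in that direction meets the residual degree-$(m-k)$ curve in at most $m-k$ points). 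Taking $m=d-3$ bounds the affine points on each $\Pi_i$ by $(d-3)q$, hence the affine rational points of $X$ by $3(d-3)q$.

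It remains to count the rational points of $\overline{X}$ at infinity. Writing $H_\infty\cong\mathbb{P}^2$ for the plane at infinity of $\mathbb{P}^3$, the section $\overline{X}\cap H_\infty$ is the projective plane curve cut out by the leading form of $\phi$; since $a_d\neq 0$ and $\phi_d$ is homogeneous of degree $d-3$, this leading form is nonzero of degree exactly $d-3$, so $\overline{X}\cap H_\infty$ is a plane curve of degree $d-3$ in $\mathbb{P}^2$, and such a curve has at most $(d-3)q+1$ rational points (bound a pencil of lines through one of its rational points). Adding the two contributions,
\[
\bigl|\overline{X}(\mathbb{F}_q)\bigr|\ \le\ 3(d-3)q+\bigl((d-3)q+1\bigr)\ =\ 4(d-3)q+1\ \le\ 4\bigl((d-3)q+1\bigr).
\]
The step I expect to be delicate is making the two plane-curve bounds watertight when the curves involved are reducible or non-reduced; the decisive input is that absolute irreducibility of $\phi$ (together with $\deg\phi=d-3$) keeps any of the planes $\Pi_i$, and the plane at infinity, from being absorbed into $\overline{X}$, so that every relevant section genuinely is a curve of the stated degree and the ``$mq$'' estimates apply. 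One should also note that points lying on two of the $\Pi_i$ are only overcounted, which can only help the inequality.
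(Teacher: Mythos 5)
Your argument is correct and is essentially the standard proof: you confine the affine rational points of $X$ to the three planes $x+y=0$, $x+z=0$, $y+z=0$ via Proposition 1, use absolute irreducibility to ensure that none of these planes (nor the plane at infinity) is a component of $\overline{X}$, and then apply the classical bounds $(d-3)q$ and $(d-3)q+1$ for affine and projective plane curves of degree $d-3$; the paper itself does not reprove this corollary but cites Rodier, whose proof follows the same decomposition (four plane sections of $\overline{X}$, each a curve of degree $d-3$ with at most $(d-3)q+1$ points, whence the factor $4$), so your count $4(d-3)q+1$ is in fact marginally sharper. The only point to tighten is your assertion that the leading form of $\phi$ is $a_d\phi_d\neq 0$: this tacitly assumes $d$ is not a power of $2$ (otherwise $\phi_d=0$), an assumption the paper also makes implicitly when stating $\deg\phi=d-3$, and in that degenerate case the section at infinity has smaller degree so the bound only improves; likewise the parenthetical pencil-of-lines justification of the $(d-3)q+1$ bound needs the routine extra case where a line through the chosen point lies on the curve (Serre's bound covers this).
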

\vspace{0.3cm} Using this corollary and the bound results of
Lang-Weil and Ghorpade-Lachaud, that \textcolor{black}{guarantees} many rational points
on a surface
for all $n$ sufficiently large, we have the following theorem \cite{Rod}.\\

\begin{thm} Let $f:L\rightarrow L$ a
polynomial function of degree $d$. Suppose that the surface $X$ of
affine equation
$$\frac{f(x)+f(y)+f(z)+f(x+y+z)}{(x+y)(x+z)(y+z)}=0$$
is absolutely irreducible (or has an absolutely irreducible
component over $L$) and $d \geq 9$, $d < 0.45q^{1/4}+0.5$, then $f$
is not an APN function.
\end{thm}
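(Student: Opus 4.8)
The plan is a counting argument by contradiction, weighing an upper bound for the number of $L$-rational points of $\overline{X}$ coming from the APN hypothesis against a lower bound coming from absolute irreducibility. Assume, for contradiction, that $f$ is APN. If $X$ is absolutely irreducible, put $Y:=X$ and $\delta:=\deg X=d-3$; if $X$ is not absolutely irreducible but has an absolutely irreducible component $X'$ over $L$, put $Y:=X'$ and $\delta:=\deg X'\le d-3$. In the second case $Y$ is still a surface (the components of the hypersurface $X$ are themselves hypersurfaces) and $\overline Y\subseteq\overline X$, so $|\overline Y(L)|\le|\overline X(L)|$ in all cases. The goal is to show that, when $d<0.45\,q^{1/4}+0.5$, the inequalities $q^2\lesssim|\overline Y(L)|$ and $|\overline X(L)|\lesssim (d-3)q$ are mutually inconsistent.

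For the upper bound, recall from Proposition 1 that $f$ being APN forces every $L$-rational point of the affine surface $f(x)+f(y)+f(z)+f(x+y+z)=0$ to lie on $(x+y)(x+z)(y+z)=0$. Since $X$ is a union of components of that surface, $X(L)$ is contained in the curve cut on $X$ by $(x+y)(x+z)(y+z)=0$, which has degree at most $3(d-3)$; bounding the $L$-points of a curve of bounded degree and adding the points at infinity is exactly the computation behind Corollary 1 and does not in fact use irreducibility of $X$, so under the APN hypothesis
\[
|\overline X(L)|\le 4\big((d-3)q+1\big),\qquad\text{hence}\qquad |\overline Y(L)|\le 4\big((d-3)q+1\big).
\]

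For the lower bound, $\overline Y$ is a geometrically irreducible projective surface in $\mathbb P^3$ of degree $\delta\le d-3$ (the projective closure of an absolutely irreducible affine variety remains absolutely irreducible). Applying the effective Lang--Weil estimate in the sharpened form of Ghorpade and Lachaud for surfaces,
\[
\big|\,|\overline Y(L)|-(q^2+q+1)\,\big|\le(\delta-1)(\delta-2)\,q^{3/2}+C(\delta)\,q
\]
with $C(\delta)$ an explicit polynomial in $\delta$, and since $\delta\le d-3$ this gives $|\overline Y(L)|\ge q^2+q+1-(d-4)(d-5)\,q^{3/2}-C(d-3)\,q$.

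Combining the two bounds yields $q^2+q+1-(d-4)(d-5)\,q^{3/2}-C(d-3)\,q\le 4(d-3)q+4$. Dividing by $q$ and isolating the dominant terms turns this into a constraint of the shape $q^{1/2}\le(d-4)(d-5)+(\text{terms that are lower order in }q)$, so the binding requirement is $q^{1/2}\gtrsim(d-4)(d-5)$, i.e. morally $q^{1/4}\gtrsim d$. Carrying out the elementary optimisation carefully --- using $d\ge 9$ to dominate the lower-order terms and plugging in the explicit value of $C$ --- one finds the displayed inequality is impossible as soon as $d<0.45\,q^{1/4}+0.5$, contradicting the hypothesis; hence $f$ is not APN. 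I expect the only real work to be the bookkeeping: extracting a sharp enough explicit constant from the Lang--Weil/Ghorpade--Lachaud bound and then doing the arithmetic tightly enough that the clean threshold $0.45\,q^{1/4}+0.5$ emerges rather than a weaker exponent or constant; the hypothesis $d\ge 9$ is exactly the slack needed for the lower-order contributions to be absorbed.
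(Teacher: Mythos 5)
Your overall strategy is the intended one (the paper does not prove Theorem 1 itself but quotes it from Rodier, and the proof is exactly the combination of Corollary 1 with a Lang--Weil/Ghorpade--Lachaud lower bound), but there is a genuine gap at the step where you assert that the upper bound $|\overline{X}(L)|\le 4((d-3)q+1)$ ``does not in fact use irreducibility of $X$.'' The APN hypothesis only confines the affine rational points of $X$ to the union of the three planes $x+y=0$, $x+z=0$, $y+z=0$; to turn that into a point count one must know that the surface being counted meets this union in a \emph{curve}, i.e.\ that no component of it is one of these planes. This can genuinely fail: since $\phi_{2m}=\phi_m^2\,(x+y)(x+z)(y+z)$, already for $f(x)=x^{10}$ the three planes are absolutely irreducible $L$-rational components of $X$, so $\overline{X}$ has on the order of $q^2$ rational points even though $f$ is APN for every odd $n$. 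Hence your blanket bound on $|\overline{X}(L)|$ is false in general, and the ``absolutely irreducible component'' case of the theorem is exactly where the care is needed: the correct move is to bound $|\overline{Y}(L)|$ directly by intersecting the chosen component $Y$ with $(x+y)(x+z)(y+z)=0$, which is a curve of degree at most $3\delta$ \emph{provided} $Y$ is not one of the three planes --- automatic when $X$ itself is absolutely irreducible (then $\deg X=d-3\ge 6$ by $d\ge 9$), but requiring justification (or an explicit exclusion, as in Rodier's formulation) in the component case. Applying Corollary 1 to all of $\overline{X}$ is both unnecessary and unavailable.

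A second, more mechanical, shortfall: the theorem is an explicit-constants statement, and the content of ``$d\ge 9$, $d<0.45\,q^{1/4}+0.5$'' lives entirely in the arithmetic you defer. Note that with the Ghorpade--Lachaud constant, of order $(\delta+3)^4$, the term $C(\delta)\,q$ is \emph{comparable} to $(d-4)(d-5)\,q^{3/2}$ in the critical regime $d\approx 0.45\,q^{1/4}$ (both are a constant fraction of $q^{3/2}$), so it is not a lower-order term absorbed by $d\ge 9$; whether the threshold $0.45$ survives depends on actually carrying out that computation, as Rodier does. As written, your argument yields at best a qualitative ``$f$ is not APN once $q$ is large relative to $d$'' in the absolutely irreducible case, not the quoted theorem.
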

\vspace{0.3cm}
 Using this theorem, it can be proved that, if $X$ is
absolutely irreducible (or has an absolutely irreducible factor over
$L$) then $f$ is not exceptional APN.

\section{\textbf{RECENT RESULTS}}
\textcolor{black}{In this section we state some families of polynomial functions that
aren't exceptional APN from \cite{Aub,CauF,DJ,Rod2}}

\begin{thm}\textcolor{black}{(Aubry, McGuire, Rodier \cite{Aub})} If the degree of the polynomial function $f$ is odd and not a
Gold or a Kasami-Welch number then $f$ is not APN over
$L=\mathbb{F}_{q^n}$ for all $n$ sufficiently large.
\end{thm}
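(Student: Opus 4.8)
The plan is to combine the absolute irreducibility criterion above (if the surface $\phi(x,y,z)=0$ is absolutely irreducible, or merely has an absolutely irreducible component over $L$, and $d\ge 9$, $d<0.45q^{1/4}+0.5$, then $f$ is not APN) with an arithmetic analysis of the top homogeneous part
\[
\phi_d(x,y,z)=\frac{x^d+y^d+z^d+(x+y+z)^d}{(x+y)(x+z)(y+z)}.
\]
Since $d$ is fixed, the inequality $d<0.45(q^n)^{1/4}+0.5$ holds for all sufficiently large $n$, so it suffices to prove that the surface $X\colon\phi=0$ has an absolutely irreducible component defined over $L$. The finitely many odd degrees $5\le d\le 7$ not excluded by hypothesis---essentially only $d=7$---lie below the range of that criterion and would be treated directly, via the point count of the Corollary above applied to the corresponding explicit surface.

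Next I would reduce from $\phi$ to $\phi_d$. Writing $f(x)=\sum_{j=0}^{d}a_jx^j$ with $a_d\ne 0$, one has $\phi=\sum_{j=3}^{d}a_j\phi_j$, whose leading form is $a_d\phi_d$, homogeneous of degree $d-3$ and with coefficients in $\mathbb{F}_2$. I would invoke the standard lemma that \emph{if $\phi_d$ has an absolutely irreducible factor over $\mathbb{F}_2$ which is not repeated, then $\phi$ has an absolutely irreducible factor over $L$}: factor $\phi$ into absolute irreducibles over $\overline{L}$; since $\overline{L}[x,y,z]$ is a domain, the leading form of each factor divides $\phi_d$; a non-repeated $\mathbb{F}_2$-rational factor $g$ of $\phi_d$ therefore divides the leading form of exactly one absolute factor $P$ of $\phi$, and since every Galois conjugate of $P$ has leading form divisible by $g$ (which is Galois fixed), uniqueness forces $P$ to be Galois stable, i.e.\ defined over $L$. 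Thus the theorem reduces to the claim: \emph{for every odd $d$ that is neither a Gold number $2^k+1$ nor a Kasami--Welch number $2^{2k}-2^k+1$, the form $\phi_d$ has a non-repeated absolutely irreducible factor defined over $\mathbb{F}_2$}---and in fact one expects $\phi_d$ itself to be absolutely irreducible for all but a few small $d$.

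To prove this I would study the projective plane curve $\mathcal{C}_d\colon\phi_d=0$ over $\overline{\mathbb{F}}_2$ and rule out nontrivial factorizations. If $\phi_d=g\cdot h$ with $g,h$ coprime nonconstant forms, then $\{g=0\}\cap\{h=0\}\subseteq\mathrm{Sing}(\mathcal{C}_d)$, while B\'ezout gives $\deg g\cdot\deg h$ intersection points counted with multiplicity; so it suffices to bound the singular locus of $\mathcal{C}_d$. That bound I would obtain by computing the $\gcd$ of $\phi_d$ with its partial derivatives, which amounts to analyzing the common factors of $x^d+y^d+z^d+(x+y+z)^d$ with its derivatives in characteristic $2$; this is conveniently done by specializing to lines such as $z=0$, where the numerator becomes $x^d+y^d+(x+y)^d$ and $\phi_d$ restricts to the binary form $\bigl(x^d+y^d+(x+y)^d\bigr)/\bigl(xy(x+y)\bigr)$, whose repeated roots are controlled by the $\gcd$ with its $x$-derivative. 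Running the same singularity count on the Galois conjugates of a hypothetical proper factor $g$ then forces $g$ to be $\mathbb{F}_2$-rational. The arithmetic of the exponent $d$---the $2$-adic valuation of $d-1$, the values $\gcd(d,2^k-1)$, and the extra splittings that occur precisely when $d=2^k+1$ or $d=2^{2k}-2^k+1$---is exactly what makes the singularity count large enough to permit a factorization only at the Gold and Kasami--Welch numbers.

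\textbf{Main obstacle.} The hard part is this last step: carrying out the characteristic-$2$ factorization and $\gcd$ analysis of $\phi_d$ uniformly in $d$, and proving that the \emph{only} surviving degeneracies---$\phi_d$ a proper power, all of its absolutely irreducible factors repeated, or the relevant factor failing to be $\mathbb{F}_2$-rational---occur exactly at the Gold and Kasami--Welch exponents. The Gold case is relatively transparent, since there $\phi_{2^k+1}$ splits into factors governed by the subfield $\mathbb{F}_{2^k}$; the Kasami--Welch case is markedly more delicate and is where most of the case analysis will go, alongside the finitely many genuinely small degrees (notably $d=7$) that must be handled outside the asymptotic machinery.
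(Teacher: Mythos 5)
There is a genuine gap, and you have named it yourself. The reduction you describe (fix $d$, let $n$ grow so that $d<0.45q^{1/4}+0.5$ holds; pass from $\phi$ to its leading form $\phi_d$ via the lemma that a non-repeated absolutely irreducible $\mathbb{F}_2$-rational factor of $\phi_d$ lifts to an absolutely irreducible factor of $\phi$ defined over $L$) is indeed the standard frame, and it is essentially how Aubry, McGuire and Rodier argue in \cite{Aub}, which is all this paper does for the present statement -- it is quoted, not reproved here. But the entire mathematical content of the theorem lives in the step you defer: proving that for every odd $d$ which is neither $2^k+1$ nor $2^{2k}-2^k+1$, the form $\phi_d$ is absolutely irreducible (or at least has a reduced absolutely irreducible component over $\mathbb{F}_2$). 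That statement is not something your B\'ezout-plus-singular-locus sketch delivers ``uniformly in $d$'': bounding the singular points of $\phi_d=0$ and comparing with the intersection number of a hypothetical factorization is exactly the method of Janwa--Wilson \cite{Jan,Jan2}, which settles $d\equiv 3\pmod 4$, while the case $d\equiv 1\pmod 4$ resisted this approach for years and was only completed by Hernando and McGuire \cite{Her} with a considerably more delicate analysis (this is the proof of the exceptional-numbers conjecture). So your proposal, as written, assumes the hardest known ingredient rather than proving it, and the cited proof in \cite{Aub} does not reprove it either -- it invokes these monomial results as black boxes and adds only the leading-form/hyperplane-at-infinity lemma plus the Lang--Weil--Ghorpade--Lachaud point-count argument.

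Two smaller points. First, your treatment of small degrees is off in detail: $d=5$ is a Gold number and hence excluded by hypothesis, and $d=7$ (which is $\equiv 3\pmod 4$, so $\phi_7$ is absolutely irreducible by Janwa--Wilson) falls outside the stated range $d\ge 9$ of Theorem 1 of the paper, so it must be handled with a sharper point-count than the one-line appeal to the Corollary suggests; this is done carefully in \cite{Aub} but should not be waved away. Second, your lifting lemma needs the $\mathbb{F}_2$-rational factor of $\phi_d$ to be \emph{reduced} (non-repeated), which in the end again relies on knowing the factorization behaviour of $\phi_d$ -- the very question postponed to your ``main obstacle.'' In short, the skeleton matches the known proof, but the proposal is not a proof: the decisive absolute-irreducibility theorem for $\phi_d$ is missing.
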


For the even degree case, they proved the following:

\begin{thm}
If the degree of the polynomial function $f$ is $2e$ with $e$ odd,
and if $f$ contains a term of odd degree, then $f$ is not APN over
$L=\mathbb{F}_{q^n}$ for all $n$ sufficiently large.
\end{thm}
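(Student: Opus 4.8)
The plan is to apply Corollary~1 together with Theorem~1: it suffices to produce an absolutely irreducible factor of $\phi$ that is defined over $L$ itself and is distinct from each of $x+y$, $x+z$, $y+z$ (so that the associated component of $X$ is not contained in the forbidden surface $(x+y)(x+z)(y+z)=0$); the point‑count bound then contradicts the APN property of $f$ over $\mathbb{F}_{q^{n}}$ for every $n$ large enough that $q^{n}$ clears the thresholds in Theorem~1 (the finitely many small degrees, essentially only $d=6$, being handled by sharper point‑count estimates). After scaling the leading coefficient of $f$ to $1$, I would first record the characteristic‑two identity $x^{2e}+y^{2e}+z^{2e}+(x+y+z)^{2e}=\bigl(x^{e}+y^{e}+z^{e}+(x+y+z)^{e}\bigr)^{2}$, which gives $\phi_{2e}=(x+y)(x+z)(y+z)\,\phi_{e}^{2}$, so the top‑degree form of $\phi$ is $(x+y)(x+z)(y+z)\,\phi_{e}(x,y,z)^{2}$.

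The two facts that make the parity hypotheses usable are the following. First, because $e$ is odd, $x+y\nmid\phi_{e}$: the numerator $x^{e}+y^{e}+z^{e}+(x+y+z)^{e}$ vanishes on $x=y$ exactly to first order, the relevant first derivative along $x=y$ being $x^{e-1}+z^{e-1}\neq 0$, and symmetrically for $x+z$ and $y+z$; hence each of $x+y$, $x+z$, $y+z$ occurs with multiplicity exactly one in the top‑degree form of $\phi$. Second, because $f$ has a term of odd degree $m\ge 3$, none of $x+y$, $x+z$, $y+z$ divides $\phi$ itself: restricting to $x=y$ gives $\phi(x,x,z)=\bigl(f'(x)+f'(z)\bigr)/(x+z)^{2}$, a nonzero polynomial since $f'$ is not constant and the top odd‑degree term of $f$ survives in it. (The hypothesis should be read with the odd‑degree term having degree at least $3$: if the only odd‑degree term of $f$ is linear, then $f=g^{2}+(\text{linear})$ with $\deg g=e$ odd, and one invokes Theorem~2 for $g$ when $e$ is not a Gold or Kasami--Welch number, the remaining sub‑case being the still open situation of a polynomial of Gold or Kasami--Welch degree.)

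With these in hand the argument runs as follows. Factor $\phi=c\prod_{i}Q_{i}^{m_{i}}$ into irreducibles over $L$; their top‑degree forms multiply to $(x+y)(x+z)(y+z)\,\phi_{e}^{2}$, in which $x+y$ has multiplicity exactly one. Hence exactly one factor, say $Q_{i_{0}}$, satisfies $x+y\mid (Q_{i_{0}})_{\mathrm{top}}$, and it occurs with $m_{i_{0}}=1$ and with $x+y$ to the first power in $(Q_{i_{0}})_{\mathrm{top}}$. Then $Q_{i_{0}}$ must be absolutely irreducible: if it split over $\overline{L}$ into Frobenius‑conjugate absolutely irreducible factors, each conjugate of its leading form would again be divisible by the Frobenius‑fixed form $x+y$, forcing $(x+y)^{2}$ into $(Q_{i_{0}})_{\mathrm{top}}$, a contradiction. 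Finally $Q_{i_{0}}\notin\{x+y,x+z,y+z\}$: it is not $x+y$ because $x+y\nmid\phi$, and it is not $x+z$ or $y+z$ because those linear forms are not divisible by $x+y$. Thus $Q_{i_{0}}$ is the desired $L$‑rational absolutely irreducible factor of $\phi$, and Corollary~1 / Theorem~1 complete the proof.

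The real obstacle --- and the reason this needs an argument different from the odd‑degree case (Theorem~2) --- is that the top‑degree form $(x+y)(x+z)(y+z)\,\phi_{e}^{2}$ carries its interesting part $\phi_{e}^{2}$ as a perfect square, so there is no simple absolutely irreducible component at infinity to lift in the usual fashion; the point is to recognize that the otherwise harmless linear factors $x+y$, $x+z$, $y+z$ are \emph{simple} in the leading form precisely because $e$ is odd, and that the odd‑degree term of $f$ guarantees they do not divide $\phi$, so one of them can be used to track a genuine $L$‑rational absolutely irreducible component of $X$ through the non‑trivial geometry. Establishing the two multiplicity statements, $x+y\nmid\phi_{e}$ and $x+y\nmid\phi$, is the technical heart; the rest is bookkeeping with leading forms and the Frobenius action.
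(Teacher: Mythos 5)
You are proving the theorem that this paper only quotes: Theorem 3 is stated in the survey section and attributed to Aubry--McGuire--Rodier \cite{Aub}, with no proof reproduced here, so there is no in-paper argument to compare against. Judged on its own, your proof is correct and is essentially the known route: the leading form $\phi_{2e}=(x+y)(x+z)(y+z)\phi_e^2$, the fact that oddness of $e$ makes each of the three linear forms appear with multiplicity exactly one there (the first-order coefficient $x^{e-1}+z^{e-1}$ computation is right), the Frobenius-orbit argument showing that the unique $L$-irreducible factor whose leading form contains $x+y$ must be absolutely irreducible and occur to the first power, the identity $\phi(x,x,z)(x+z)^2=f'(x)+f'(z)$ to rule out the planes, and Rodier's point-count bound (Corollary 1 / Theorem 1) to conclude. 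Two remarks. First, your caveat on the hypothesis is not optional but necessary: if the only odd-degree term of $f$ is linear, the statement as literally written fails --- $f(x)=x^6+x$ has degree $2e$ with $e=3$ odd, contains an odd-degree term, yet is APN on every $\mathbb{F}_{2^n}$ (it is EA-equivalent to $x^6=(x^3)^2$, hence to the Gold function $x^3$); so the hypothesis must indeed be read as ``a term of odd degree at least $3$'', i.e.\ a term actually contributing to $\phi$, exactly as you say, and in the excluded sub-case with $e$ a Gold or Kasami--Welch number the conclusion can be outright false rather than merely open. Second, the small-degree case $d=6$ is only waved at: Theorem 1 as stated needs $d\geq 9$, so strictly you should invoke the Lang--Weil estimate directly (for fixed $d$ the $L$-rational absolutely irreducible component has at least $q^{2}-Cq^{3/2}$ points over $\mathbb{F}_{q^n}$, while APN would force $O(q)$ of them via intersection with the three planes), rather than unspecified ``sharper estimates''; this is a presentational, not a mathematical, gap.
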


\begin{thm}\textcolor{black}{(Rodier \cite{Rod2})}
If the degree of the polynomial function $f$ is even such that $\deg(f)=4e$ with
$e\equiv 3(\mod 4)$ and if the polynomials of the form $(x+y)(y+z)(z+x)+P$ with\\
$$P(x,y,z)=c_1(x^2+y^2+z^2)+c_4(xy+xz+yz)+b_1(x+y+z)+d$$
for $c_1,c_4,b_1,d \in \mathbb{F}_{q^3}$, do not divide $\phi$ then $f$ is not APN
over $\mathbb{F}_{q^n}$ for $n$ large.
\end{thm}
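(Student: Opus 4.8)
The plan is to deduce the statement from Rodier's absolute-irreducibility criterion recalled above: if the surface $X:\phi=0$ has an absolutely irreducible component that is defined over $\mathbb{F}_q$ (or over a fixed finite extension) and is not contained in the bad locus $(x+y)(x+z)(y+z)=0$, then the Lang-Weil/Ghorpade-Lachaud estimates prevent $f$ from being APN over $\mathbb{F}_{q^n}$ for $n$ large (the side conditions $d\ge 9$ and $d<0.45(q^n)^{1/4}+0.5$ holding automatically once $n$ is large, since $d=4e\ge 12$ is fixed). So the task is to produce such a component whenever no polynomial $(x+y)(y+z)(z+x)+P$ with $\deg P\le 2$ divides $\phi$. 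After rescaling we may take $f$ monic of degree $d=4e$ and write $\phi=\phi_{d}+\phi_{d-1}+\cdots$ in homogeneous components. The first step is to identify the leading form $\phi_d$. In characteristic $2$ with $d=4e$, Frobenius gives $x^{4e}+y^{4e}+z^{4e}+(x+y+z)^{4e}=\bigl(x^{e}+y^{e}+z^{e}+(x+y+z)^{e}\bigr)^{4}$, and since $e$ is odd each of $\ell_1=x+y$, $\ell_2=x+z$, $\ell_3=y+z$ divides $x^{e}+y^{e}+z^{e}+(x+y+z)^{e}$; hence, writing
$$\psi_e(x,y,z)=\frac{x^{e}+y^{e}+z^{e}+(x+y+z)^{e}}{(x+y)(x+z)(y+z)}$$
(a form of degree $e-3$ coprime to $\ell_1,\ell_2,\ell_3$), one gets $\phi_{d}=\psi_e^{\,4}\,(\ell_1\ell_2\ell_3)^{3}$.

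Because $e\equiv 3\pmod{4}$, either $e=3$, where $\psi_3=1$ and $\phi_d=(\ell_1\ell_2\ell_3)^{3}$, or $e>3$, where $e$ is an odd integer that is neither a Gold nor a Kasami-Welch number, so by the Janwa-McGuire-Wilson conjecture (known in this range) $\psi_e$ is absolutely irreducible over $\mathbb{F}_2$. The decisive point is that in both cases $\phi_d$ has \emph{no} irreducible factor of multiplicity one: $\ell_1,\ell_2,\ell_3$ occur to the power $3$ and $\psi_e$ to the power $4$. Hence the usual lifting lemma --- a simple factor of $\phi_d$ promotes to an absolutely irreducible factor of $\phi$ --- is unavailable, which is precisely why a supplementary hypothesis is needed; it serves to discard the genuine exceptions, the prototype being $f(x)=x^{12}$, which is EA-equivalent to the Gold function $x^{3}$, has $\phi=(\ell_1\ell_2\ell_3)^{3}$, and is therefore divisible by the excluded polynomial $(x+y)(y+z)(z+x)+0$.

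The core of the argument is an analysis of which factorizations of $\phi$ over $\overline{\mathbb{F}_q}$ are compatible with this leading form. Write $\phi=\prod_i F_i^{a_i}$ with the $F_i$ distinct and absolutely irreducible, so that their leading forms multiply, with multiplicities $a_i$, to $\phi_d$. When $e>3$, some $F_j$ must carry $\psi_e$ in its leading form; if one such $F_j$ can be taken defined over $\mathbb{F}_q$ --- in particular if the whole block $\psi_e^{4}$ comes from a single $F_j$, which is then the unique $\psi_e$-carrier, hence Galois-stable --- then $\deg F_j\ge e-3\ge 4$, so $\{F_j=0\}$ is not one of the three bad planes and we are done. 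Otherwise one peels off the $\mathbb{F}_q$-rational product of the Frobenius orbit(s) of the $\psi_e$-carrying factors, reducing to an $\mathbb{F}_q$-polynomial $G$ of degree $9$ whose leading form is $(\ell_1\ell_2\ell_3)^{3}$ --- the situation of $\phi$ itself when $e=3$. For such a $G$ one establishes the dichotomy: either $G$ has an absolutely irreducible factor which is not one of the planes $\ell_1=0$, $\ell_2=0$, $\ell_3=0$ (a usable component, once descended to $\mathbb{F}_q$), or $G$ is a product of cubics of the shape $(x+y)(y+z)(z+x)+P$; in the second case the symmetry of $\phi$ in $x,y,z$ puts $P$ into the stated symmetric form, and the Galois orbit of these three cubics accounts for the field $\mathbb{F}_{q^3}$ appearing in the statement, so that $(x+y)(y+z)(z+x)+P$ divides $\phi$ --- contrary to hypothesis. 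To run this dichotomy one combines Bezout estimates for the intersections of each candidate component with the three planes $x+y=0$, $x+z=0$, $y+z=0$ together with Rodier's characterization applied to the conjugate components, whose rational points are likewise confined to the bad locus.

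The step I expect to be the main obstacle is exactly this case analysis over $\overline{\mathbb{F}_q}$. Since $\phi_d$ is far from squarefree, no absolutely irreducible factor can be read off it directly; one must instead go through, configuration by configuration, every way the multiplicity-$4$ block $\psi_e^{4}$ and the multiplicity-$3$ block $(\ell_1\ell_2\ell_3)^{3}$ can be partitioned among the (possibly repeated, possibly Galois-conjugate) absolutely irreducible factors of $\phi$, showing in each case that either a usable component over $\mathbb{F}_q$ (or over $\mathbb{F}_{q^3}$) appears or $\phi$ is divisible by a polynomial of the special form. Once every configuration is settled, Rodier's criterion applies and the theorem follows.
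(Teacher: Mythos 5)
First, a point of reference: the paper you were asked to match does not actually prove this statement; it is quoted verbatim from Rodier \cite{Rod2} in the survey section ``RECENT RESULTS,'' so the only comparison possible is with Rodier's original argument, whose overall strategy (apply Theorem 1 after analyzing the possible factorizations of $\phi$ forced by its leading form $\phi_{4e}=\phi_e^{4}\bigl((x+y)(x+z)(y+z)\bigr)^{3}$, with $\phi_e$ absolutely irreducible for $e\equiv 3\pmod 4$, $e>3$, by Janwa--Wilson) your proposal does reproduce in outline. Your computation of the leading form, the observation that every factor of it has multiplicity at least $3$ so that no simple-factor lifting lemma applies, and the identification of $x^{12}$ (equivalent to the Gold function $x^3$) as the prototype exception are all correct and are indeed the right opening moves.

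However, what you submit is a road map, not a proof. The entire content of the theorem is the dichotomy you state but do not establish: that if no cubic $(x+y)(y+z)(z+x)+P$ with $P$ of the prescribed shape over $\mathbb{F}_{q^3}$ divides $\phi$, then $\phi$ possesses an absolutely irreducible factor \emph{defined over $\mathbb{F}_q$}. You explicitly defer this (``the step I expect to be the main obstacle is exactly this case analysis''), and several of the intermediate claims on the way to it are themselves unjustified: (i) the ``peeling off'' step tacitly assumes the $\phi_e$-carrying factors have leading forms free of the linear forms $\ell_1,\ell_2,\ell_3$, so that the residual polynomial $G$ has leading form exactly $(\ell_1\ell_2\ell_3)^3$ of degree $9$; in general the $\ell_i$ may be distributed among those factors and each such configuration needs separate treatment; (ii) the assertion that the symmetry of $\phi$ forces $P$ into the stated symmetric quadratic form does not follow at once, since the symmetric group need only permute the (individually non-symmetric) cubic factors among themselves --- pinning down $P$, and the field $\mathbb{F}_{q^3}$, requires an explicit comparison of the lower-degree homogeneous parts of the factors, which is where the arithmetic hypothesis on $e$ and the vanishing of the intermediate $\phi_j$'s must be used; (iii) a component defined only over $\mathbb{F}_{q^3}$ is not directly usable in Theorem 1, which demands a component over $L=\mathbb{F}_q$ (and not contained in $(x+y)(x+z)(y+z)=0$), so the parenthetical ``or over $\mathbb{F}_{q^3}$'' hides exactly the case the hypothesis of the theorem is designed to exclude. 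Until the configuration-by-configuration analysis is actually carried out, the proposal does not prove the theorem.
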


\textcolor{black}{Rodier proved} a more precise result for polynomials of degree 12. If the degree of the polynomial
defined over $\mathbb{F}_{q}$ is 12, then either $f$ is not APN over $\mathbb{F}_{q^n}$ for
large $n$ or $f$ is CCZ equivalent to the Gold function $f(x)=x^3$.\\
\textcolor{black}{Recently}, Florian Caullery \cite{CauF} obtained an analogous result for polynomials of degree 20.
They are not exceptional APN or are CCZ equivalent to $f(x)=x^5$.

Aubry, McGuire and Rodier \cite{Aub} also found results for Gold degree polynomials.

\begin{thm} Suppose $f(x)=x^{2^k+1}+g(x)\in L[x]$ where $\deg(g)\leq
2^{k-1}+1$. Let $g(x)=\sum_{j=0}^{2^{k-1}+1}a_jx^j$. Suppose that
there exists a nonzero coefficient $a_j$ of $g$ such that
$\phi_j(x,y,z)$ is absolutely irreducible. Then $\phi(x,y,z)$ is
absolutely irreducible and so $f$ is not exceptional APN.
\end{thm}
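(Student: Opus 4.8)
The plan is to prove that, under the stated hypotheses, the polynomial $\phi=\phi(x,y,z)$ attached to $f$ is absolutely irreducible over $\overline{\mathbb{F}_q}$; by Theorem~1 and the remark following it, this immediately yields that $f$ is not exceptional APN. Put $D=2^k-2=\deg\phi$ and decompose $\phi=\sum_m\phi^{(m)}$ into homogeneous components. Since the coefficient of $x^{2^k+1}$ in $f$ is $1$ and $\deg g\le 2^{k-1}+1$, we have $\phi^{(D)}=\phi_{2^k+1}$, while $\phi^{(m)}=a_{m+3}\phi_{m+3}$ for $0\le m\le 2^{k-1}-2$ and, crucially, $\phi^{(m)}=0$ for every $m$ in the range $2^{k-1}-1\le m\le D-1$. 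The argument exploits this degree gap together with an explicit factorization of the leading form.

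First I would record that, by a direct computation in characteristic $2$ using $(x+y+z)^{2^k}=x^{2^k}+y^{2^k}+z^{2^k}$,
\[
\phi_{2^k+1}(x,y,z)=\frac{(x+z)^{2^k-1}+(y+z)^{2^k-1}}{x+y}=\prod_{\gamma\in\mathbb{F}_{2^k}\setminus\{0,1\}}\bigl((x+z)+\gamma(y+z)\bigr),
\]
a product of $2^k-2$ pairwise non-associate linear forms; in particular $\phi_{2^k+1}$ is squarefree and all of its irreducible factors are linear. Hence, if $\phi=A\cdot B$ is any factorization over $\overline{\mathbb{F}_q}$ into non-units with $\deg A=a$, $\deg B=b$, $a+b=D$, then comparing leading forms gives $\widetilde A_a\,\widetilde B_b=\phi_{2^k+1}$, so the top homogeneous parts $\widetilde A_a$ of $A$ and $\widetilde B_b$ of $B$ are products of complementary subsets of these linear forms; in particular $\gcd(\widetilde A_a,\widetilde B_b)=1$.

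Next I would propagate this coprimality downward through the gap. Assume without loss of generality $1\le a\le b$, so that $a\le 2^{k-1}-1$, and write $A=\sum_{i\le a}\widetilde A_i$ and $B=\sum_{j\le b}\widetilde B_j$ in homogeneous components. By induction on $\ell=1,2,\dots,2^{k-1}-1$ one shows $\widetilde A_{a-\ell}=0$ and $\widetilde B_{b-\ell}=0$: comparing the degree-$(D-\ell)$ part of $\phi=A\cdot B$, which vanishes since $D-\ell$ lies in the gap, and discarding the terms killed by the inductive hypothesis, one is left with $\widetilde A_a\widetilde B_{b-\ell}=\widetilde A_{a-\ell}\widetilde B_b$ (the right-hand term being absent when $\ell>a$); since $\gcd(\widetilde A_a,\widetilde B_b)=1$ while $\deg\widetilde A_{a-\ell}<\deg\widetilde A_a$, this forces $\widetilde A_{a-\ell}=0$ and then $\widetilde B_{b-\ell}=0$. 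Because $2^{k-1}-1\ge a$, this kills every homogeneous component of $A$ below the leading one, so $A$ is homogeneous.

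Finally I would derive a contradiction from the existence of $j\le 2^{k-1}+1$ with $a_j\ne 0$ and $\phi_j$ absolutely irreducible. Since $\phi_3=1$ is a unit and $\phi_4=0$ in characteristic $2$, such a $j$ necessarily satisfies $j\ge 5$, so $\deg\phi_j=j-3\ge 2$. As $A$ is homogeneous, $A$ divides every homogeneous component of $\phi$; in particular $A\mid\phi^{(j-3)}=a_j\phi_j$, and since $\phi_j$ is absolutely irreducible and $A$ is a non-unit, $A=c\,\phi_j$ for a constant $c$, hence $a=j-3$. But then $\phi_{2^k+1}=\phi^{(D)}=A\cdot\widetilde B_b=c\,\phi_j\,\widetilde B_b$, so $\phi_j\mid\phi_{2^k+1}$, which is impossible because $\phi_{2^k+1}$ is a product of distinct linear forms while $\phi_j$ is irreducible of degree $\ge 2$. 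Therefore $\phi$ admits no factorization into non-units over $\overline{\mathbb{F}_q}$, i.e.\ $\phi$ is absolutely irreducible, and $f$ is not exceptional APN by Theorem~1. I expect the main obstacle to be the bookkeeping in the descending induction (tracking precisely which homogeneous products can survive in each degree of the gap, and handling the boundary case $\ell>a$ where $A$ collapses onto its leading form), whereas the leading-form factorization, although essential, is only a short calculation.
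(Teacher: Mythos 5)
Your proof is correct and follows essentially the same approach as the paper's technique: the paper quotes this theorem from Aubry--McGuire--Rodier without reproducing its proof, but your argument (homogeneous decomposition, coprimality of the leading forms via the linear factorization of $\phi_{2^k+1}$, cascading vanishing of lower components through the degree gap, and the final contradiction from an absolutely irreducible $\phi_j$ of degree at least $2$ dividing a product of linear forms) is exactly the method the paper itself uses to prove its analogous Theorem 12. No gaps noted.
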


Some functions \textcolor{black}{covered} by this theorem are:\\
$f(x)=x^{17}+h(x)$, where $\deg(h)\leq
9$; or $f(x)=x^{33}+h(x)$, where $\deg(h)\leq 17$.\\
\textcolor{black}{Additionally,} they also found that the bound for $g$ is best
possible in the sense that if $f(x)=x^{2^k+1}+g(x)$ with
$\deg(g)=2^{k-1}+2$, \textcolor{black}{then $\phi_j(x,y,z)$ is not}
absolutely irreducible. For being more specific they proved:

\begin{thm} Suppose $f(x)=x^{2^k+1}+g(x)\in L[x]$ and $\deg(g)=
2^{k-1}+2$. Let $k$ be odd and relatively prime to $n$. If $g(x)$
does not have the form $ax^{2^{k-1}+2}+a^2x^3$ then $\phi$ is
absolutely irreducible, while if $g(x)$ does have this form, then
either $\phi$ is absolutely irreducible or $\phi$ splits into two
absolutely irreducible factors that are both defined over $L$.
\end{thm}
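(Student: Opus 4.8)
The plan is to study $\phi$ through its leading (top-degree) form and the "gap" in the degrees that occur. Write $f(x)=x^{2^{k}+1}+g(x)$ with $g=\sum_{j\le 2^{k-1}+2}a_jx^{j}$ and $a_{2^{k-1}+2}\neq 0$, so that
$$\phi=\phi_{2^{k}+1}+\sum_{j=3}^{2^{k-1}+2}a_j\phi_j ,$$
where $\deg\phi_{2^{k}+1}=2^{k}-2$ while every remaining term has degree at most $2^{k-1}-1$; hence $\phi$ has no homogeneous component of degree $m$ for $2^{k-1}\le m\le 2^{k}-3$, and its leading form is $\phi_{2^{k}+1}$. First I would record, using the translation invariance $N(x,y,z)=N(x+z,y+z,0)$ of the relevant numerators, the two explicit factorizations
$$\phi_{2^{k}+1}=\prod_{\alpha\in\mathbb F_{2^{k}}^{*}\setminus\{1\}}\big((x+z)+\alpha(y+z)\big),$$
a product of $2^{k}-2$ pairwise distinct linear forms, and
$$\phi_{2^{k-1}+2}=(x+y)(y+z)(z+x)\prod_{\gamma\in\mathbb F_{2^{k-2}}^{*}\setminus\{1\}}\big((x+z)+\gamma(y+z)\big)^{2}$$
(the last product is empty when $k=3$). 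The forms $(x+y)$, $(x+z)$, $(y+z)$ are the ones attached to $\alpha=1$ and to the degenerate values $\alpha=0,\infty$, none of which occur in $\phi_{2^{k}+1}$; and since $k$ is odd, $\gcd(k,k-2)=\gcd(k,2)=1$, so $\mathbb F_{2^{k}}\cap\mathbb F_{2^{k-2}}=\mathbb F_2$. Comparing linear factors then gives the key lemma: $\phi_{2^{k}+1}$ and $\phi_{2^{k-1}+2}$ are relatively prime (and $\phi_{2^{k-1}+2}\neq 0$ since $2^{k-1}+2$ is not a power of $2$).

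Next, suppose $\phi=PQ$ with $P,Q$ non-constant, $\deg P=r\le\deg Q=s$, so $r+s=2^{k}-2$ and $r\le 2^{k-1}-1$. Equating leading forms gives $\phi_{2^{k}+1}=\overline P\,\overline Q$, and since $\phi_{2^{k}+1}$ is a product of distinct linear forms, $\overline P$ and $\overline Q$ are complementary subproducts and in particular are coprime. Writing $P=\sum_iP_i$, $Q=\sum_jQ_j$ in homogeneous parts and equating the components of $PQ$ of degrees $2^{k-1},\dots,2^{k}-3$ to $0$, an induction using $\gcd(\overline P,\overline Q)=1$ shows $P_{r-m}=0$ and $Q_{s-m}=0$ for $1\le m\le 2^{k-1}-2$. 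If $r<2^{k-1}-1$ this forces $P=\overline P$ homogeneous, and matching the degree-$(2^{k-1}-1)$ component of $PQ$ yields $\overline P\mid a_{2^{k-1}+2}\phi_{2^{k-1}+2}$ with $a_{2^{k-1}+2}\neq 0$, so $\overline P$ divides $\gcd(\phi_{2^{k}+1},\phi_{2^{k-1}+2})=1$ — impossible. Hence $r=s=2^{k-1}-1$ and $P=\overline P+c$, $Q=\overline Q+d$ for scalars $c,d$; moreover no factor of $\phi$ can have degree $<2^{k-1}-1$, so if $\phi$ is reducible it is a product of exactly two absolutely irreducible factors, each of degree $2^{k-1}-1$.

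In this boundary case I would extract the precise constraints. Expanding $(\overline P+c)(\overline Q+d)$ and matching components of $\phi$ degree by degree gives: (i) $c\,\overline Q+d\,\overline P=a_{2^{k-1}+2}\,\phi_{2^{k-1}+2}$; (ii) $a_j=0$ for every $j$ with $3<j<2^{k-1}+2$ that is not a power of $2$ (two-power terms are irrelevant since $\phi_{2^{m}}=0$); (iii) $cd=a_3$ (as $\phi_3=1$). Thus, up to harmless two-power terms, $g(x)=a\,x^{2^{k-1}+2}+a_3x^{3}$ with $a:=a_{2^{k-1}+2}$. One then decides solvability of the system $\overline P\,\overline Q=\phi_{2^{k}+1}$, $c\,\overline Q+d\,\overline P=a\,\phi_{2^{k-1}+2}$: first $c,d\neq 0$ (else $\overline P$ or $\overline Q$ would divide $\phi_{2^{k-1}+2}$, against the key lemma), and then reducing (i) modulo the linear forms $(x+z)+\gamma(y+z)$, $\gamma\in\mathbb F_{2^{k-2}}^{*}\setminus\{1\}$ — which divide $\phi_{2^{k-1}+2}$ to order two but divide neither $\overline P$ nor $\overline Q$ — produces scalar identities that pin down the index set defining $\overline P$ as a Galois-stable set (so $\overline P,\overline Q,c,d$ are defined over $L$) and force $a_3=a^{2}$. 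Equivalently, for precisely this $g$ one has $f(x)=x\,L_1(x)L_2(x)$ with $L_i(x)=x^{2^{k-1}}+a\zeta^{i}x$, $\zeta\in\mathbb F_4$ a primitive cube root of unity, and $L_1L_2=x^{2^{k}}+a x^{2^{k-1}+1}+a^{2}x^{2}\in L[x]$; this trilinear structure exhibits the two candidate factors of $\phi$ directly and shows they are defined over $L$. Assembling the pieces: if $g$ is not of the form $a\,x^{2^{k-1}+2}+a^{2}x^{3}$ then no factorization exists and $\phi$ is absolutely irreducible, while if $g$ is of this form then $\phi$ is either absolutely irreducible or splits into two absolutely irreducible factors of degree $2^{k-1}-1$, both defined over $L$.

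The hard part will be exactly this last step: establishing the relative-primeness lemma cleanly and, in the boundary case $r=s=2^{k-1}-1$, solving the two-equation system to recover the exact shape $a\,x^{2^{k-1}+2}+a^{2}x^{3}$ together with the rationality of the factors. Everything before it — reduction to the leading form $\phi_{2^{k}+1}$, the factorization into distinct linear forms, and the degree-gap induction forcing $r=s=2^{k-1}-1$ — is routine leading-form and degree-gap analysis once the factorizations of $\phi_{2^{k}+1}$ and $\phi_{2^{k-1}+2}$ are in hand.
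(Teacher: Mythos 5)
Your first half is sound and is essentially the standard argument (it is the same machinery this paper uses to prove its Theorem 12, the even-exponent analogue; the statement you were given is itself quoted from Aubry--McGuire--Rodier and not re-proved here): the factorization $\phi_{2^k+1}=\prod_{\alpha\in\mathbb{F}_{2^k}\setminus\mathbb{F}_2}(x+\alpha y+(\alpha+1)z)$, the identity $\phi_{2^{k-1}+2}=(x+y)(x+z)(y+z)\phi_{2^{k-2}+1}^2$, the coprimality of these two via $\gcd(k,k-2)=1$ for $k$ odd, the leading-form and degree-gap induction killing the middle homogeneous parts, the elimination of the case $\deg P<2^{k-1}-1$, and the resulting constraints (i) $d\overline{P}+c\overline{Q}=a\phi_{2^{k-1}+2}$, (ii) $a_j=0$ for the intermediate non-two-power degrees, (iii) $cd=a_3$, are all correct and correctly argued, as is the observation that any nontrivial factorization must be into exactly two absolutely irreducible factors of degree $2^{k-1}-1$.

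The genuine gap is exactly where the content of this particular theorem lies, and you have asserted it rather than proved it: you must show that solvability of the system $\overline{P}\,\overline{Q}=\phi_{2^k+1}$, $d\overline{P}+c\overline{Q}=a\phi_{2^{k-1}+2}$, $cd=a_3$ forces precisely $a_3=a^2$, and that in that case $\overline{P},\overline{Q},c,d$ can be taken in $L$ (i.e., the two factors are not merely a Galois-conjugate pair over a quadratic extension of $L$). Your suggestion of reducing (i) modulo the forms $(x+z)+\gamma(y+z)$ does produce scalar identities of the type $du^2\prod_{\alpha\in S}(\gamma+\alpha)=c\prod_{\alpha\in T}(\gamma+\alpha)$, but you give no argument that these determine the partition $S\sqcup T$ of $\mathbb{F}_{2^k}\setminus\mathbb{F}_2$, make it stable under the Frobenius of $L$ (this is where $(k,n)=1$ must enter, and it never concretely does in your sketch), or yield $cd=a^2$; the squared factors and the three factors $(x+y),(x+z),(y+z)$ on the right of (i) must also be exploited. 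Likewise, the identity $x^{2^k+1}+ax^{2^{k-1}+2}+a^2x^3=x\,L_1(x)L_2(x)$ with $L_i(x)=x^{2^{k-1}}+a\zeta^i x$ is a correct and pretty observation, but it does not by itself ``exhibit the two candidate factors of $\phi$'' nor prove they are defined over $L$: one would still have to compute $f(x)+f(y)+f(z)+f(x+y+z)$ from this trilinear structure and analyze the quotient, and conversely it says nothing about why no factorization can exist when $a_3\neq a^2$. Until that endgame is carried out, the dichotomy in the statement (absolute irreducibility unless $g=ax^{2^{k-1}+2}+a^2x^3$, and rationality of the two factors in the exceptional case) is not established.
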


In \textcolor{black}{\cite{DJ, {DJDESI}},}    \textcolor{black}{we} extended these results for the Gold degree
case, \textcolor{black}{we} found new families of polynomials which are not
exceptional APN.

\begin{thm} \label{thm:3mod4}
\textcolor{black}{For} $k \geq 2$, let $f(x)=x^{2^k+1}+h(x) \in L[x]$\textcolor{black}{,}
where $\deg(h)<2^k+1$, and  \textcolor{black}{$\deg(h)\equiv 3\pmod 4$}. Then\textcolor{black}{,} $\phi(x,y,z)$ is absolutely irreducible.
\end{thm}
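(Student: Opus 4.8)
The plan is to argue by contradiction, using that the homogeneous leading form of $\phi$ is the Gold form $\phi_{2^k+1}$, which is squarefree, and then controlling the lower-degree parts of a hypothetical factorization by a degree-gap argument. First I would fix notation: writing $f(x)=x^{2^k+1}+h(x)$ and $\phi=\sum_{j\ge 3}a_j\phi_j$ gives $\phi=\phi_{2^k+1}+\psi$ with $\psi=\sum_{3\le j\le\deg h}a_j\phi_j$ of degree exactly $\deg h-3$ (here $a_{\deg h}\neq 0$ and $\phi_{\deg h}\neq 0$, since $\deg h$ is odd), while $\phi_{2^k+1}$ is homogeneous of degree $2^k-2$. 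Because $\deg h\equiv 3\pmod 4$ is odd and $<2^k+1$, in fact $\deg h\le 2^k-1$, so the gap $g:=(2^k-2)-(\deg h-3)=2^k+1-\deg h$ satisfies $g\ge 2$ and $g\equiv 2\pmod 4$; in particular $\phi_{2^k+1}$ is the leading form of $\phi$, and $\phi$ has no homogeneous component in degrees $\deg h-2,\dots,2^k-3$. The case $k=2$ is immediate: then $\deg h=3$, $\phi_3=1$, so $\phi=\phi_5+c$ for a constant $c$, with $\phi_5$ an absolutely irreducible conic, and a polynomial whose leading form is absolutely irreducible is itself absolutely irreducible. So assume $k\ge 3$.

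Next I would record the structure of the leading form. Specializing $z=0$ and using $(x+y+z)^{2^k}=x^{2^k}+y^{2^k}+z^{2^k}$ yields
\begin{equation*}
\phi_{2^k+1}(x,y,0)=\frac{x^{2^k-1}+y^{2^k-1}}{x+y}=\prod_{\zeta\in\mathbb{F}_{2^k}^{\ast}\setminus\{1\}}(x+\zeta y),
\end{equation*}
a product of $2^k-2$ pairwise distinct linear forms. Since $z\nmid\phi_{2^k+1}$, this forces $\phi_{2^k+1}$ to be squarefree and each of its absolutely irreducible factors $g$ to satisfy $\deg g(x,y,0)=\deg g$. Now suppose $\phi=PQ$ over $\overline{L}$ with $\deg P,\deg Q\ge 1$. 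Comparing leading forms, $\phi_{2^k+1}=\overline{P}\,\overline{Q}$ with $\deg\overline{P}=\deg P$, $\deg\overline{Q}=\deg Q$ and $\gcd(\overline{P},\overline{Q})=1$. Writing $P=\overline{P}+P^{\flat}$, $Q=\overline{Q}+Q^{\flat}$ with $P^{\flat},Q^{\flat}$ of strictly smaller degree, the identity $\overline{P}\,Q^{\flat}+P^{\flat}\overline{Q}+P^{\flat}Q^{\flat}=\psi$ together with coprimality of $\overline{P},\overline{Q}$ (if the top parts of $\overline{P}\,Q^{\flat}$ and $P^{\flat}\overline{Q}$ cancelled, then $\overline{Q}\mid Q^{\flat}$, impossible for degree reasons, in fact forcing $P^{\flat}=Q^{\flat}=0$ and hence $\psi=0$) gives $\deg P+\deg Q^{\flat}\le\deg h-3$ and $\deg Q+\deg P^{\flat}\le\deg h-3$. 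Comparing degree-$(\deg h-3)$ parts of the identity, and noting $\deg(P^{\flat}Q^{\flat})<\deg h-3$ precisely because $\deg h<2^k+1$, produces a homogeneous lifting identity
\begin{equation*}
\overline{P}\,R+S\,\overline{Q}=a_{\deg h}\,\phi_{\deg h},
\end{equation*}
with $R,S$ homogeneous, not both zero, $\deg R<\deg\overline{Q}$, $\deg S<\deg\overline{P}$.

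The remaining task — which is where essentially all the work and the main obstacle lie — is to show this lifting identity is impossible. For $\deg h=3$ it is immediate, since $\phi_3=1$ makes the right side a nonzero constant, forcing $R=S=0$ by the same coprimality/degree argument. For $\deg h\ge 7$ I would specialize the lifting identity to $z=0$, obtaining $A\,R'+S'\,B=a_{\deg h}\,\phi_{\deg h}(x,y,0)$ where $A=\overline{P}(x,y,0)$ and $B=\overline{Q}(x,y,0)$ are products over complementary subsets $S_P\sqcup S_Q=\mathbb{F}_{2^k}^{\ast}\setminus\{1\}$ of the distinct linear forms $x+\zeta y$, with $\deg R'<\deg B$ and $\deg S'<\deg A$. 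Since $A$ and $B$ are coprime products of distinct linear forms, evaluating at the roots of $A$ determines $S'$ at $\deg A$ distinct points while forcing $\deg S'<\deg A$, so $S'$ is uniquely pinned down (and symmetrically $R'$); the contradiction should then come from the rigid low-degree shape of $\phi_{\deg h}(x,y,0)$ forced by $\deg h\equiv 3\pmod 4$ — writing $\deg h=4m+3$ we have $(x+y)^{\deg h}=(x^4+y^4)^m(x+y)^3$, which tightly constrains the monomials of $\phi_{\deg h}(x,y,0)$ — combined with the size and residue of the gap $g\equiv 2\pmod 4$, these being incompatible with the forced values of $R'$ and $S'$. Carrying this through forces $R=S=0$, hence $\psi=0$, contradicting $a_{\deg h}\neq 0$; it is exactly at this final step that the hypothesis $\deg h\equiv 3\pmod 4$ is used beyond mere oddness.
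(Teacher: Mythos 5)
Your setup is fine and runs parallel to the method this paper itself uses (compare the proof of Theorem 12): the leading form of $\phi$ is $\phi_{2^k+1}$, which by (5) is a product of distinct linear forms $x+\alpha y+(\alpha+1)z$, so the leading forms $\overline{P},\overline{Q}$ of a putative factorization are coprime, and the degree gap $2^k+1-\deg h$ lets you cascade the vanishing of intermediate homogeneous parts down to an identity $\overline{P}R+S\overline{Q}=a_{d}\phi_{d}$, $d=\deg h$. But that is only the routine part; the entire content of the theorem is the impossibility of this identity, and you leave it as a sketch (``the contradiction should then come from\dots''). That is a genuine gap, not a deferred detail. Worse, the route you indicate cannot work as described: after specializing $z=0$, $y=1$, the polynomials $A=\overline{P}(x,1,0)$ and $B=\overline{Q}(x,1,0)$ are coprime with $\deg A+\deg B=2^k-2>\deg\phi_d(x,1,0)$, so by the B\'ezout/partial-fraction argument there \emph{always} exist unique $R',S'$ with $\deg R'<\deg B$, $\deg S'<\deg A$ solving $AR'+S'B=a_d\phi_d(x,1,0)$, whatever the ``shape'' of $\phi_d(x,y,0)$; no contradiction is available from the specialized identity together with only the weak bounds you state. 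Any actual obstruction must exploit the much stronger constraints you do not use: $R$ and $S$ are specializations of the specific homogeneous pieces $Q_{t-g}$, $P_{s-g}$ of degrees $t-g$ and $s-g$ (not merely $<t$, $<s$), together with the further equations lower in the cascade, and, in the sub-case where one of $R,S$ vanishes, one needs precisely that no linear factor $x+\alpha y+(\alpha+1)z$ of $\phi_{2^k+1}$ divides $\phi_d$. In other words, the argument requires genuine structural input on $\phi_d$ for $d\equiv 3\pmod 4$ --- the Janwa--Wilson absolute irreducibility of $\phi_d$ in this congruence class and/or the relative primeness of $\phi_{2^k+1}$ and $\phi_d$ (the content of Theorem 10 here, proved in \cite{DJ,DJDESI} for this case) --- which your proposal never establishes nor invokes; the congruence $\deg h\equiv 3\pmod 4$ enters the real proof through these facts, not through the expansion $(x+y)^{4m+3}=(x^4+y^4)^m(x+y)^3$.

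Two smaller points. Your $k=2$ shortcut misstates the structure of $\phi_5$: by (5) it is the product of the two conjugate linear forms $x+\alpha y+(\alpha+1)z$, $\alpha\in\mathbb{F}_4\setminus\mathbb{F}_2$, hence a degenerate conic, not absolutely irreducible; the conclusion for $k=2$ still holds, but by the direct computation that $\ell_1\ell_2+c$ with $c\neq 0$ cannot factor, not by your stated principle. Also, your case analysis does not separately treat $t\le g$ (equivalently $R=0$), where $Q$ becomes homogeneous and $\overline{Q}\mid\phi_d$ must be excluded; this again is exactly the relative-primeness input. As it stands, the proposal reproduces the standard reduction but omits the step in which the hypothesis $\deg h\equiv 3\pmod 4$ actually does its work.
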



For the case $1 \pmod 4$, in   \textcolor{black}{ \cite{DJ,{DJDESI}}}  we also proved:

\begin{thm} \label{thm:1mod4}
\textcolor{black}{For $k \geq 2$, let $f(x)=x^{2^k+1}+h(x) \in L[x]$
where $d=\deg(h)\textcolor{black}{\equiv 1} \pmod 4$ and $d<{2^k+1}$. If $\phi_{2^k+1}, \phi_d$ are
relatively prime,  \textcolor{black}{then} $\phi(x,y,z)$ is absolutely irreducible.}
\end{thm}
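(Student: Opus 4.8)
The plan is to argue by contradiction, using that the leading form of $\phi$ is forced to be $\phi_{2^k+1}$, together with the classical factorization of $\phi_{2^k+1}$ into distinct linear forms. Write $\phi = \phi_{2^k+1} + \Psi$ with $\Psi = \sum_{j=3}^{d} a_j \phi_j$ and $a_d \neq 0$. Since $\phi_{2^k+1}$ is homogeneous of degree $2^k-2$ while $\deg\Psi = d-3 < 2^k-2$, the top-degree homogeneous part of $\phi$ is $\phi_{2^k+1}$, the next nonzero homogeneous part (in degree $d-3$) is $a_d\phi_d$, and all intermediate homogeneous components of $\phi$, in degrees $d-2,\dots,2^k-3$, vanish. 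I would first record the known identity
\[
\phi_{2^k+1}(x,y,z) = \frac{(y+z)^{2^k-1}+(z+x)^{2^k-1}}{x+y} = \prod_{\beta\in\mathbb{F}_{2^k}^{\ast}\setminus\{1\}}\bigl(\beta x + y + (1+\beta)z\bigr),
\]
so that the leading form of $\phi$ is a product of $2^k-2$ pairwise distinct linear forms $\ell_\beta$, all vanishing at $(1:1:1)$. In particular this leading form is reduced, and the hypothesis of the theorem says exactly that no $\ell_\beta$ divides $\phi_d$.

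Suppose $\phi$ is not absolutely irreducible, say $\phi = GH$ with $G$ absolutely irreducible of degree $a\geq 1$ and $H$ of degree $b = 2^k-2-a\geq 1$. Comparing leading forms gives $G_h H_h = \phi_{2^k+1}$, and by uniqueness of the factorization of the right-hand side into the distinct $\ell_\beta$ there is a partition $S\sqcup S^{c}$ of the index set with $G_h = \prod_{\beta\in S}\ell_\beta$, $H_h = \prod_{\beta\in S^{c}}\ell_\beta$, so that $|S|=a$, $|S^{c}|=b$ and $\gcd(G_h,H_h)=1$. Put $e := (2^k-2)-(d-3) = 2^k-d+1 \geq 2$. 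Exploiting the vanishing of the homogeneous components of $GH$ in degrees $2^k-3,\dots,d-2$, a descending induction on $i=1,\dots,e-1$ — at each step writing the vanishing component as $G_h H^{[b-i]} + G^{[a-i]} H_h = 0$ and using $\gcd(G_h,H_h)=1$ together with a degree count — shows that $G$ has no homogeneous component in degrees $a-1,\dots,a-e+1$ and $H$ has none in degrees $b-1,\dots,b-e+1$; matching the degree-$(d-3)$ components of $\phi$ and $GH$ then yields
\[
a_d\,\phi_d = G_h\,H^{[b-e]} + G^{[a-e]}\,H_h ,
\]
where $G^{[a-e]},H^{[b-e]}$ are the homogeneous parts of $G,H$ of degrees $a-e,b-e$ (possibly $0$). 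The easy cases close at once: if $a<e$ then $G=G_h$ is homogeneous, so $G_h\mid a_d\phi_d$ and some $\ell_\beta$ divides $\phi_d$, contradicting the hypothesis — and symmetrically if $b<e$; likewise if $G^{[a-e]}=0$ then $G_h\mid a_d\phi_d$, and if $H^{[b-e]}=0$ then $H_h\mid a_d\phi_d$. This already settles the theorem whenever $d\leq 2^{k-1}+1$.

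The remaining and main difficulty is the balanced range $2^{k-1}+2\leq d\leq 2^k-1$, where one is left with the identity $a_d\phi_d = G_h H^{[b-e]} + G^{[a-e]}H_h$ in which $G_h,H_h$ are coprime squarefree products of linear forms and neither summand vanishes; note that a leading-form-plus-next-form argument alone cannot suffice here, since $F=(\ell_1\ell_2+1)(\ell_3\ell_4+1)$ shows a reducible polynomial can have reduced leading form coprime to its next form. The plan is therefore to reduce the identity modulo a linear factor $\ell_\beta$ of $G_h$ (and, symmetrically, of $H_h$): this kills one summand and exhibits $a_d\phi_d \bmod \ell_\beta$ as the product of the $\ell_\beta$-reductions of $G^{[a-e]}$ and $H_h$. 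One then contradicts this by bringing in the fine structure of $\phi_d$ for $d\equiv 1\pmod 4$ — via the known factorization and irreducibility properties of $\phi_d$ and of its planar specialization $\bigl((x+1)^d+x^d+1\bigr)/(x^{2}+x)$ from the Janwa--McGuire--Wilson circle of results \cite{Her,Jan,Jan2} — together, if needed, with carrying the degree descent through further homogeneous components of $\phi$. I expect this step to be the crux: it is precisely where the hypothesis $d\equiv 1\pmod 4$ (as opposed to $3\pmod 4$) enters, and where relative primeness of $\phi_{2^k+1}$ and $\phi_d$ becomes indispensable.
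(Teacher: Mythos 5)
Your setup is sound and is essentially the method used in this line of work (compare the proof of Theorem 12 in the paper, equations (14)--(17)): factor $\phi=GH$, decompose into homogeneous parts, identify the top form with $\phi_{2^k+1}=\prod_{\alpha\in\mathbb{F}_{2^k}\setminus\mathbb{F}_2}(x+\alpha y+(\alpha+1)z)$, use that it is a product of distinct linear forms to get $\gcd(G_h,H_h)=1$, kill the intermediate components down to degree $d-3$, and arrive at $a_d\phi_d=G_hH^{[b-e]}+G^{[a-e]}H_h$ with $e=2^k+1-d$. Your handling of the unbalanced situations (one factor of degree $<e$, hence homogeneous and dividing every homogeneous component of $\phi$, or one of $G^{[a-e]},H^{[b-e]}$ zero, so that some $\ell_\beta$ divides $\phi_d$) is correct and uses the coprimality hypothesis exactly as intended; but, as you say yourself, this only settles $d\le 2^{k-1}+1$, which is essentially the range already covered by Theorem 5 of Aubry--McGuire--Rodier.

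The genuine gap is the balanced case $a,b\ge e$ with $G^{[a-e]}\neq 0\neq H^{[b-e]}$, i.e.\ precisely the new range $2^{k-1}+2\le d<2^k+1$ that gives the theorem its content, and there you offer only a plan. Reducing the identity modulo a linear factor $\ell_\beta$ of $G_h$ gives $G^{[a-e]}H_h\equiv a_d\phi_d \pmod{\ell_\beta}$, which is not in itself contradictory (coprimality only tells you the right-hand side is nonzero mod $\ell_\beta$), and you do not specify which property of $\phi_d$ for $d\equiv 1\pmod 4$ is supposed to close the argument; your own example $(\ell_1\ell_2+1)(\ell_3\ell_4+1)$ shows that the data you have extracted so far (reduced top form coprime to the next form, plus the vanishing gap) cannot suffice, so some concrete additional input is indispensable. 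This is exactly where the cited proofs do real work beyond the homogeneous bookkeeping: for instance, the analogous balanced case $s=t$ of Theorem 12 is closed in the paper by specializing to the line $z=0$, $y=1$ and exploiting the explicit factorizations $\phi_{2^{2k}+1}=\prod_{\alpha\in\mathbb{F}_{2^{2k}}\setminus\mathbb{F}_2}(x+\alpha)$ and $\phi_{2^{2k-1}+2}=(x+y)(x+z)(y+z)\phi_{2^{2k-2}+1}^2$ together with a common root in $\mathbb{F}_4$; and the odd Gold-degree cases use structural facts such as the value $\phi_j(1,1,1)$, which is nonzero for $j\equiv 3\pmod 4$ but vanishes for $j\equiv 1\pmod 4$ --- the very reason the $1\pmod 4$ case is harder and carries the coprimality hypothesis. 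Without an argument of this concrete kind in the balanced case, your proposal does not prove the theorem in its main range.
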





\textcolor{black}
{In Theorem 10 ( section 4 ), one of our main results is that we prove the relative primeness of the conjecture of APN polynomials $\phi_n(x,y,z)$ and $\phi_m(x,y,z)$  when one of them is  of Gold degree. Thus proving Theorems  5 to  8 and some  other  results in the Gold degree polynomials $\phi(x,y,z)$  unconditionally (see sections 5 and 6).  The case when $d=\deg(h)\textcolor{black}{\equiv 5} \pmod 8$ is  much simpler and   has appeared in   \cite{DJDESI}.
}

The case Kasami-Welch degree polynomials seems to be the hardest one. Rodier proved the following
theorem \cite{Rod2}.

\begin{thm} Suppose that $f(x)=x^{2^{2k}-2^k+1}+g(x) \in L[x]$
where $\deg(g) \leq 2^{2k-1}-2^{k-1}+1$. Let
$g(x)=\sum_{j=0}^{2^{2k-1}-2^{k-1}+1}a_jx^j$. Suppose moreover that
there exist a nonzero coefficient $a_j$ of $g$ such that
$\phi_j(x,y,z)$ is absolutely irreducible. Then $\phi(x,y,z)$ is
absolutely irreducible.
\end{thm}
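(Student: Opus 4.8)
\medskip
\noindent\emph{Proof strategy (proposal).} The plan is to mirror, for the exponent $d=2^{2k}-2^k+1$, the argument of Aubry, McGuire and Rodier in the Gold-degree case quoted above. Write $\phi=\phi_d+\widetilde\phi$ with $\widetilde\phi=\sum_{3\le j\le\deg g}a_j\phi_j$. The form $\phi_d$ is homogeneous of degree $d-3=2^{2k}-2^k-2$, whereas each $\phi_j$ occurring in $\widetilde\phi$ is homogeneous of degree $j-3\le\deg g-3\le 2^{2k-1}-2^{k-1}-2=\tfrac{d-3}{2}-1$. Hence $\phi_d$ is the leading form of $\phi$, and there is a wide \emph{degree gap}: $\phi$ has no homogeneous component of any degree $m$ with $\tfrac{d-3}{2}\le m\le d-4$. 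This gap is the one place where the hypothesis $\deg g\le 2^{2k-1}-2^{k-1}+1$ is used.

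Before the main step I would record two facts about $\Psi:=\phi_{2^{2k}-2^k+1}$. \emph{(a) $\Psi$ is squarefree} ($d$ being odd); as in the Gold case this is checked on the line section $\Psi(x,y,0)=\bigl(x^d+y^d+(x+y)^d\bigr)/\bigl(xy(x+y)\bigr)$ (for instance, via a $\gcd$ with its $x$-derivative), and it is part of the standard Janwa--McGuire--Wilson machinery. \emph{(b) $\Psi$ has no absolutely irreducible factor defined over $\mathbb F_2$.} One can deduce (b) from results already available: the Kasami--Welch monomial $x^{d}$ is APN on $\mathbb F_{2^m}$ whenever $\gcd(k,m)=1$ (Section~1), hence on arbitrarily large $m$; if $\Psi$ had an absolutely irreducible $\mathbb F_2$-factor it would be an absolutely irreducible component over $L=\mathbb F_{2^m}$ of the $\phi$-surface of $x^{d}$, and then the bound theorem of Section~2 (applicable since $d\ge 9$ and $d<0.45(2^m)^{1/4}+0.5$ for $m$ large) would force $x^{d}$ not to be APN on $\mathbb F_{2^m}$ for all large $m$, a contradiction. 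Alternatively, one may quote the Janwa--McGuire--Wilson description of the factorization of $\phi_{2^{2k}-2^k+1}$ directly.

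Now suppose, for contradiction, that $\phi=AB$ with $\deg A,\deg B\ge 1$; relabelling if needed, put $a=\deg A\le\deg B=b$, so $a+b=d-3$ and $a\le\tfrac{d-3}{2}$. Comparing leading forms gives $\Psi=\overline A\,\overline B$, and by (a) the factors $\overline A,\overline B$ are coprime. I would then descend through the homogeneous components: inductively, if $A^{(a-i)}=B^{(b-i)}=0$ for all $i<\ell$, the degree-$(d-3-\ell)$ part of $AB$ collapses to $\overline A\,B^{(b-\ell)}+A^{(a-\ell)}\,\overline B$, which vanishes whenever $d-3-\ell$ lies in the gap, i.e.\ for $1\le\ell\le\tfrac{d-3}{2}$; coprimality then forces $\overline A\mid A^{(a-\ell)}$, and since $\deg A^{(a-\ell)}<\deg\overline A$ this gives $A^{(a-\ell)}=B^{(b-\ell)}=0$. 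As $a\le\tfrac{d-3}{2}$, this annihilates $A^{(0)},\dots,A^{(a-1)}$, so $A=\overline A$ is homogeneous (and when $a=b$ the same descent makes $B$ homogeneous too, so that $\phi$ would be homogeneous, impossible since $\widetilde\phi\ne 0$); writing $B=\overline B+C$ gives $\widetilde\phi=\overline A\,C$, so $\overline A\mid\phi_j$ for every $j$ with $a_j\ne 0$. Finally, choose (as the hypothesis allows) some $j$ with $a_j\ne 0$ and $\phi_j$ absolutely irreducible. From $\overline A\mid\phi_j$ and $\deg\overline A\ge 1$, absolute irreducibility of $\phi_j$ forces $\overline A=\lambda\phi_j$ for a nonzero scalar $\lambda$; since also $\overline A\mid\Psi$, this yields $\phi_j\mid\Psi$, an absolutely irreducible factor of $\Psi$ defined over $\mathbb F_2$, contradicting (b). Hence $\phi$ is absolutely irreducible.

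The step I expect to be the real obstacle is fact (b): controlling the absolutely irreducible factorization of $\phi_{2^{2k}-2^k+1}$. In the Gold case $\phi_{2^k+1}(x,y,0)$ telescopes to a product of \emph{distinct linear} forms, which makes the analogue of (b) essentially immediate; for the Kasami--Welch exponent $(x+y)^{2^{2k}-2^k+1}$ does not collapse in this way, and the factor structure of $\phi_{2^{2k}-2^k+1}$ is genuinely more intricate --- which is exactly why the Kasami--Welch degree is the hardest case. Routing (b) through the Section~2 bound theorem and the known APN-ness of the Kasami monomial keeps the argument self-contained; a direct treatment would require the finer factorization results from the Janwa--McGuire--Wilson circle of ideas.
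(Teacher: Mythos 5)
The paper itself does not prove this statement: it is quoted as Rodier's theorem (reference \cite{Rod2}), and the closest in-paper analogue is the proof of Theorem 12, which runs the same homogeneous-decomposition descent but, being in the Gold-degree setting, can use the explicit factorization (5) of $\phi_{2^{2k}+1}$ into distinct linear forms both for the coprimality of the leading factors and for the final contradiction. Your skeleton (degree gap, comparison of leading forms, coprimality, descent, $\overline A\mid\phi_j$, contradiction with absolute irreducibility of some $\phi_j$) is the right one, and your fact (b) is handled in a genuinely different and legitimate way: instead of invoking the Janwa--Wilson description of the factors of $\phi_{2^{2k}-2^k+1}$, you rule out an absolutely irreducible $\mathbb{F}_2$-factor by playing the bound theorem of Section~2 against the known APN-ness of the Kasami monomial on infinitely many large fields. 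That detour is sound (for $k\ge 2$ one has $d\ge 13\ge 9$, and the hypothesized factor is defined over every $\mathbb{F}_{2^m}$), and it is an attractive way to keep the endgame self-contained.

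The genuine weak point is your justification of fact (a). For the Kasami exponent the line-section check you propose simply fails: $\phi_{2^{2k}-2^k+1}(x,y,0)$ is not squarefree at all, but a perfect $(2^k+1)$-st power. Concretely, for $k=2$, $d=13$, one computes $\phi_{13}(x,y,0)=(x^2+xy+y^2)^5$, and for $k=3$, $d=57$, one gets the ninth power of $(x^7+y^7)/(x+y)$; a gcd with the $x$-derivative would therefore (misleadingly) report a huge repeated part, and no conclusion about $\phi_d$ itself can be drawn from the section. Since coprimality of $\overline A$ and $\overline B$ is exactly what drives your descent, this step must be replaced, not just hedged by "standard machinery." A correct replacement: if $p^2$ divided $\phi_d$, then every point of the surface $p=0$ would be a singular point of $F=x^d+y^d+z^d+(x+y+z)^d$, i.e.\ would satisfy $x^{d-1}=y^{d-1}=z^{d-1}=(x+y+z)^{d-1}$; since $d-1=2^k(2^k-1)$, this forces $x^{2^k-1}=y^{2^k-1}=z^{2^k-1}=(x+y+z)^{2^k-1}$, a locus of dimension at most one, contradicting that $p=0$ is two-dimensional. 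Alternatively one may cite the squarefreeness of the Kasami $\phi_d$ from Janwa--Wilson/Rodier. With (a) repaired in this way, your argument goes through.
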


\vskip .1truein

Rodier also \textcolor{black}{studied} the case when $\deg(g)=2^{2k-1}-2^{k-1}+2$.

\vskip .1truein

\textcolor{black}{We also discuss the relatively prime case of  the Kasami-Welch APN polynomials  with other APN polynomials  in \cite{DJDESI}, and in \cite{IWSDA2015} .}

\section{\textbf{MAIN RESULTS.}}
From now on, let $L=\mathbb{F}_{2^n}$, $\phi(x,y,z)$, $\phi_j(x,y,z)$ as in (2) and (3).
\subsection{\textbf{\textcolor{black}{A Proof of Relatively Prime APN Polynomial  Conjecture} }}

\textcolor{black}{We first give a  proof of  the relatively primeness of the multivariate APN polynomial conjecture, in the Gold degree case, as stated in \cite{DJDESI} and presented in
\cite{IWSDA2015}. For its statement, see Theorem 10 below.}

We begin with the following fact,
due to Janwa and Wilson \cite{Jan}, about the Gold functions.\\
For a Gold number $j=2^k+1$:
\begin{equation}
\phi_j(x,y,z)=\prod_{\alpha \in F_{2^k}-F_2}(x+\alpha y+(\alpha+1)z)
\end{equation}
Let us use the affine transformation $x\leftarrow x+1, y\leftarrow y+1$ on (5). Let's denote
$\widetilde{\phi}_j(x,y)=\phi_j(x+1,y+1,1)$. Then we have

\begin{equation}
\widetilde{\phi}_j(x,y)=\prod_{\alpha \in
\mathbb{F}_{2^k}-\mathbb{F}_2}(x+\alpha y)
\end{equation}

\begin{thm}
\textcolor{black}{If $d$ is an odd integer,  then $\phi_{2^k+1}$ and $ \phi_d$ are
relatively prime for all $k\geq 1$ except when $d=2^l+1$ and $(l,k)>1$.}
\end{thm}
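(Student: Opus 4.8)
The plan is to reduce the statement to a question about which linear forms divide $\phi_d$, and then to a short arithmetic computation on the binary expansion of $d$. By (5), $\phi_{2^k+1}=\prod_{\alpha\in\mathbb{F}_{2^k}\setminus\mathbb{F}_2}\ell_\alpha$ with $\ell_\alpha:=x+\alpha y+(\alpha+1)z$; these are pairwise non-associate linear forms, so $\phi_{2^k+1}$ is squarefree and $\phi_{2^k+1},\phi_d$ fail to be relatively prime (over $\overline{\mathbb{F}_2}$) if and only if $\ell_\alpha\mid\phi_d$ for some $\alpha\in\mathbb{F}_{2^k}\setminus\mathbb{F}_2$. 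Write $N_d:=x^d+y^d+z^d+(x+y+z)^d$ for the numerator of $\phi_d$. For $\alpha\notin\{0,1\}$ the form $\ell_\alpha$ is none of $x+y,x+z,y+z$ and is irreducible, so $\ell_\alpha\mid\phi_d\iff\ell_\alpha\mid N_d$, i.e. $N_d$ vanishes on the plane $x=\alpha y+(\alpha+1)z$; substituting there ($x+y+z$ becomes $(\alpha+1)y+\alpha z$) this says
$$Q_{d,\alpha}(y,z):=(\alpha y+(\alpha+1)z)^d+((\alpha+1)y+\alpha z)^d+y^d+z^d\equiv 0.$$

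First I would expand $Q_{d,\alpha}$ using Frobenius and Lucas' congruence: the monomials $y^az^{d-a}$ that can occur are those with $\binom{d}{a}$ odd (the binary digits of $a$ forming a subset of those of $d$), and for $0<a<d$ the coefficient of $y^az^{d-a}$ equals $\alpha^a(\alpha+1)^{d-a}+\alpha^{d-a}(\alpha+1)^a=[\alpha(\alpha+1)]^{\min(a,\,d-a)}\bigl(\alpha^{|d-2a|}+(\alpha+1)^{|d-2a|}\bigr)$. Since $\alpha\neq0,1$ this vanishes exactly when $\beta^{|d-2a|}=1$, where $\beta:=\alpha/(\alpha+1)$; note $\beta\notin\{0,1\}$ whenever $\alpha\notin\mathbb{F}_2$. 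Thus $\ell_\alpha\mid\phi_d$ forces $\beta^{|d-2a|}=1$ for \emph{every} admissible $a$ with $0<a<d$.

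Now I split into the two cases of the statement. If $d=2^l+1$, then (5) applied with exponent $l$ directly gives $\phi_d=\phi_{2^l+1}=\prod_{\alpha\in\mathbb{F}_{2^l}\setminus\mathbb{F}_2}\ell_\alpha$, so the two polynomials share a factor iff $(\mathbb{F}_{2^l}\cap\mathbb{F}_{2^k})\setminus\mathbb{F}_2=\mathbb{F}_{2^{(l,k)}}\setminus\mathbb{F}_2\neq\varnothing$, i.e. iff $(l,k)>1$ — precisely the claimed exception, the common factors being $\ell_\alpha$, $\alpha\in\mathbb{F}_{2^{(l,k)}}\setminus\mathbb{F}_2$. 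If $d$ is odd and not a Gold number, then (the case $d=1$ being degenerate) the binary expansion of $d$ has at least three ones; write its set bits as $0=i_0<i_1<\dots<i_r$ with $r\geq2$. I would then apply the necessary condition to the two admissible exponents $a=1$ and $a=2^{i_1}+1$ — both are submasks of $d$ lying strictly between $0$ and $d$, and $2(2^{i_1}+1)<d$ since $i_1\geq1$ and $i_2\geq i_1+1$ — obtaining $\beta^{\,d-2}=1$ and $\beta^{\,d-2^{i_1+1}-2}=1$. Dividing the two relations gives $\beta^{2^{i_1+1}}=1$, and together with $\beta^{\,d-2}=1$ and $\gcd(d-2,\,2^{i_1+1})=1$ (as $d-2$ is odd) this forces $\beta=1$, contradicting $\alpha\notin\mathbb{F}_2$. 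Hence no such $\ell_\alpha$ divides $\phi_d$, and $\phi_{2^k+1},\phi_d$ are relatively prime for all $k$.

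The expansion of $Q_{d,\alpha}$ and the inequality $2(2^{i_1}+1)<d$ are routine. The crux — and the only place the hypothesis is used — is the choice of the two exponents $1$ and $2^{i_1}+1$: their values of $|d-2a|$ differ by the pure $2$-power $2^{i_1+1}$, which collides with the oddness of $d-2$ to force $\operatorname{ord}(\beta)=1$. When $d=2^l+1$ the only admissible exponents strictly between $0$ and $d$ are $1$ and $2^l$, both giving $|d-2a|=2^l-1$, so this collapse cannot happen and one is left exactly with $\operatorname{ord}(\beta)\mid 2^l-1$, i.e. $\beta\in\mathbb{F}_{2^l}^{*}$ — which is the origin of the exception.
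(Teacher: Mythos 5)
Your proof is correct, and its core strategy is the same as the paper's: reduce coprimality to the claim that no linear form $x+\alpha y+(\alpha+1)z$ with $\alpha\notin\mathbb{F}_2$ divides $\phi_d$, and kill such a divisor by showing certain coefficients cannot all vanish, with Lucas' theorem certifying that the binomial coefficient indexed by the two lowest set bits of $d$ (your $\binom{d}{2^{i_1}+1}$, the paper's $\binom{m}{2^i+1}$ for $m=2^il+1$) is odd. The execution differs, though: the paper first applies the shift $x\mapsto x+1$, $y\mapsto y+1$, $z=1$, decomposes the shifted numerator into homogeneous components, extracts from degrees $m$ and $m-1$ the conditions $a^{l-1}=(a+1)^{l-1}=1$, and then gets a contradiction from the component of degree $m-(2^i+1)$; you stay in the original homogeneous coordinates, restrict the numerator $N_d$ to the plane $x=\alpha y+(\alpha+1)z$, and compress the constraints into the single parameter $\beta=\alpha/(\alpha+1)$, so that only the two coefficients $a=1$ and $a=2^{i_1}+1$ are needed and the contradiction $\beta^{2^{i_1+1}}=1\Rightarrow\beta=1$ is immediate in characteristic $2$. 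What your route buys is a slightly leaner computation (two coefficient conditions instead of three homogeneous ones, no change of variables) and, usefully, an explicit treatment of what the paper leaves implicit: that for Gold $d=2^l+1$ coprimality holds precisely when $(l,k)=1$, via the factorization (5), so the stated exception is genuinely an exception. Setting aside the degenerate $d=1$ and the routine verification of $2(2^{i_1}+1)<d$, which you sketch adequately, the argument is complete.
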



\begin{proof}
Since $(\phi_n,\phi_m)=1 \Leftrightarrow (\widetilde{\phi}_n,\widetilde{\phi}_m)=1$, we will work with the functions $\widetilde{\phi}$.\\
Let $n=2^k+1$, $m=2^il+1$, where $l >1$ is an odd integer.\\
By (6), we will prove the theorem by showing that no term $(x+ay)$ divides $\widetilde{\phi}_m$, for all $a\in \mathbb{F}_{2^k}$, $a \neq 0$, $a \neq 1$. 
Let us supose, by the way of contradiction, that this happen for some $a$, $a \neq 0$, $a \neq 1$. Then $(x+ay)$ divides $f(x,y)=\widetilde{\phi}_m(x,y)(x)(y)(x+y)$
and $f(ay,y)=0$. Writing $f(x,y)$ as a sum of homogeneous terms:
\begin{equation}
f(x,y)=F_{2^i+1}(x,y)+...+F_{m-1}(x,y)+F_{m}(x,y)
\end{equation}

Then $(x+ay)|f(x,y)$ if and only if $(x+ay)$ divides each homogeneous term $F_r$ in (7), implying $F_r(a y,y)=0$. \\
From the expansion of $f$, we have:
\begin{equation*}
F_{m-1}(x,y)=x^{m-1}+y^{m-1}+(x+y)^{m-1}
\end{equation*}
\begin{equation*}
F_{m}(x,y)=x^{m}+y^{m}+(x+y)^{m}
\end{equation*}
Then
\begin{equation}
(ay)^{2^il}+y^{2^il}+(ay+y)^{2^il}=0
\end{equation}
\begin{equation}
(ay)^{2^il+1}+y^{2^il+1}+(ay+y)^{2^il+1}=0 
\end{equation}
which respectively implies that
\begin{equation}
(a+1)^l+a^l+1=0
\end{equation}
\begin{equation}
(a+1)^{l+1}+a^{l+1}+1=0
\end{equation}
Substituting (10) in (11)
$$(a^l+1)(a+1)=a^{l+1}+1$$
$$a^{l+1}+a^l+a+1=a^{l+1}+1$$
\begin{equation}
a^{l-1}=1
\end{equation}
ie, $a$ is a $(l-1)$-th root of unity. Furthermore, using this in (10)
\begin{equation}
(a+1)^{l-1}=1
\end{equation}
ie, $a+1$ is also a $(l-1)$-th root of unity.\\
Now, let us consider the term $F_{m-(2^i+1)}$ in (7) and prove that $x+ay$ does not divide it.\\
$F_{m-(2^i+1)}(x,y)$=$m \choose {2^i+1}$$(x^{m-(2^i+1)}+y^{m-(2^i+1)}+(x+y)^{m-(2^i+1)})$

The classical theorem of Lucas states:

For non-negative integers $a$, $b$ and a prime $p$, the following relation holds:\\
$a\choose b$ $\equiv \prod _{i=0}^{r}$$a_i \choose b_i$$\mod p$,\\
where\\
$a=a_rp^r+a_{r-1}p^{r-1}+...+a_1p+a_0$,\\
$b=b_rp^r+b_{r-1}p^{r-1}+...+b_1p+b_0$,\\
are the base $p$ expansions of $a$ and $b$ respectively. (Where by   convention $a\choose b$$=0$ if $a<b$ and ${0 \choose 0}=1$.)

Let $l=a_r2^r+a_{r-1}2^{r-1}+...+a_12+1$ be the base 2 expansion of $l$. Then, the expansion of
$m$ is $m=2^il+1=a_r2^{i+r}+a_{r-1}2^{i+r-1}+...+a_12^{i+1}+2^i+1$.
Using the  theorem of Lucas, we have that $m \choose {2^i+1}$=1.
\\ \\
For $x+ay$ to divide $F_{m-(2^i+1)}$, it should happen that $F_{m-(2^i+1)}(ay,y)=0$, however:
\begin{center}
$F_{m-(2^i+1)}(ay,y)=(ay)^{m-(2^i+1)}+y^{m-(2^i+1)}+(ay+y)^{m-(2^i+1)}$\\
$=((a+1)^{m-(2^i+1)}+a^{m-(2^i+1)}+1)y^{m-(2^i+1)}$\\
$=((a+1)^{2^i(l-1)}+a^{2^i(l-1)}+1)y^{m-(2^i+1)}$\\
\end{center}
using that $a,a+1$ are $(l-1)$-th roots of unity, we get that the last equality is equal to $y^{m-(2^i+1)}$. A contradiction.

\end{proof}

\section{Application to  Absolutely Irreducible Polynomials and Exceptional APN Functions}
\label{sec:application:conjecture}

\noindent Using this resulst we can generalize theorems 7 and 8 in the following theorem.

\begin{thm}
\textcolor{black}
{For $k \geq 2$, let $f(x)=x^{2^k+1}+h(x) \in L[x]$
where $\deg{h}<2^k+1$, and  $\deg(h)$ is an odd integer (not a Gold number $2^l+1$ with $(l,k)>1$). Then $\phi(x,y,z)$ is absolutely irreducible,  and $f(x)$ can not be exceptional APN.
}\end{thm}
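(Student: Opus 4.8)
\emph{Overview.} The plan is to obtain the absolute irreducibility of $\phi$ from the two criteria already in hand --- Theorem~\ref{thm:3mod4} and Theorem~\ref{thm:1mod4} --- using the new relative-primeness statement (Theorem~10) to check the hypothesis of the second, and then to deduce that $f$ is not exceptional APN by the standard point-counting argument, i.e.\ Corollary~1 together with the Lang--Weil/Ghorpade--Lachaud bounds (this is exactly what lies behind Theorem~1). Before that I would dispose of the degenerate terms: in characteristic $2$ every monomial $x^{2^i}$ is additive, so any term of $h$ whose exponent is a power of $2$ --- in particular every term of degree $\le 2$ --- is $\mathbb F_2$-affine, hence contributes nothing to $\phi=\sum_{j\ge 3}a_j\phi_j$ and does not affect whether $f$ is APN; and if $\deg(h)=1$ then $f$ is EA-equivalent to the Gold monomial $x^{2^k+1}$, so there is nothing to prove (this value is to be read among the natural exceptions). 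Thus we may assume $d:=\deg(h)$ is odd with $3\le d<2^k+1$, and $d$ is not of the form $2^l+1$ with $(l,k)>1$.

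\emph{Absolute irreducibility of $\phi$.} Since $d$ is odd, it is either $3$ or $1$ modulo $4$, and these two cases are exhaustive. If $d\equiv 3\pmod 4$, then Theorem~\ref{thm:3mod4} applies directly and gives that $\phi(x,y,z)$ is absolutely irreducible; here the excluded Gold degrees cause no trouble, because the only Gold number that is $\equiv 3\pmod 4$ is $3=2^1+1$, for which $(1,k)=1$. If $d\equiv 1\pmod 4$, then $d\ge 5$, so $\phi_d$ is a genuine homogeneous polynomial of positive degree; since $d$ is odd and is not $2^l+1$ with $(l,k)>1$, Theorem~10 yields that $\phi_{2^k+1}$ and $\phi_d$ are relatively prime, which is precisely the hypothesis of Theorem~\ref{thm:1mod4}, so again $\phi(x,y,z)$ is absolutely irreducible. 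Thus $\phi$ is absolutely irreducible for every $f$ permitted by the statement; conversely, the omitted Gold degrees $d=2^l+1$ with $(l,k)>1$ are exactly those for which Theorem~10 fails, since there $\widetilde{\phi}_{2^k+1}$ and $\widetilde{\phi}_d$ share the common linear factors $x+\alpha y$, $\alpha\in\mathbb F_{2^{(l,k)}}\setminus\mathbb F_2$.

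\emph{$f$ is not exceptional APN.} Absolute irreducibility of $\phi$ is a property over $\overline{\mathbb F_2}$, hence is retained over every extension $\mathbb F_{2^{nm}}$ of $L=\mathbb F_{2^n}$; moreover $\deg f = 2^k+1\ge 5$ for all $k\ge 2$, and the surface $X=\{\phi=0\}$ is not contained in $(x+y)(x+z)(y+z)=0$ (its degree $2^k-2$ exceeds $1$). If $f$ were APN over $\mathbb F_{2^{nm}}$, then by Corollary~1 the projective closure $\overline{X}$ would have at most $4((2^{k}-2)2^{nm}+1)$ rational points, while the Lang--Weil/Ghorpade--Lachaud estimates force $\overline{X}$ to carry $\sim 2^{2nm}$ rational points; for all sufficiently large $m$ these two statements are incompatible. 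Hence $f$ is not APN over $\mathbb F_{2^{nm}}$ for all large $m$, i.e.\ $f$ is not exceptional APN. (When $k\ge 3$ one may instead simply invoke Theorem~1, whose hypothesis $\deg f\ge 9$ is then satisfied; the explicit point count above is needed only to cover $k=2$, where $\deg f=5$ and $h$ has degree $3$.)

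\emph{Main obstacle.} Within this argument essentially all of the work has been absorbed into Theorem~10, whose proof depends on Lucas' congruence for binomial coefficients modulo $2$; granting it, what is left is bookkeeping --- verifying that the hypothesis that $d$ is odd and not $2^l+1$ with $(l,k)>1$ is precisely the union of the hypotheses of Theorems~\ref{thm:3mod4} and~\ref{thm:1mod4} once Theorem~10 is substituted in, and that the degenerate low cases ($d=3$, and $k=2$ with $\deg\phi=2$) remain covered, which they are because Corollary~1 requires only $\deg f\ge 5$.
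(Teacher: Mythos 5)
Your proposal is correct and takes essentially the same approach as the paper: split on $\deg(h)\bmod 4$, apply Theorem 7 in the $3\pmod 4$ case and Theorem 8 in the $1\pmod 4$ case with its relative-primeness hypothesis supplied by Theorem 10, then conclude non-exceptionality from Corollary 1 together with the Lang--Weil/Ghorpade--Lachaud bounds (the machinery behind Theorem 1). Your additional attention to the degenerate values ($\deg(h)=1$ as a natural exception, and $k=2$ where $\deg f=5<9$) only supplies care that the paper leaves implicit and does not change the route.
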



\subsection{\textcolor{black}{Some Pending Cases.}}
\textcolor{black}{From  theorem 11, the  missing cases are where  Gold degree polynomials of the form $f(x)=x^{2^k+1}+h(x)$,  with $\deg(h)$   any  gold number}. However, for polynomials
of the form $f(x)=x^{2^k+1}+h(x)$, $\deg(h)=2^{k'}+1$, $(k,k')=1$; $\phi_{2^k+1},\phi_{2^{k'}+1}$ are relatively primes.
Then $\phi(x,y,z)$ is absolutely irreducible by theorem 8.

For non relatively prime numbers $k,k'$, as in the proof of the first case of theorem 8 (see \cite{DJ}), we have that: $Q_{t-1}=0,Q_{t-2}=0,..., Q_{1}=0,Q_0=0$ (Observe in the proof that $t < e$, where $e=2^k+1-d$).

Then, the hypersurface $\phi(x,y,z)$ related to $f$ satisfies:
\begin{equation*}
\sum_{j=3}^{2^{k}+1}a_j\phi_j(x,y,z)=(P_s+P_{s-1}+...+P_0)(Q_t)
\end{equation*}

Therefore, $\phi(x,y,z)$ would be absolutely irreducible if $h$ contains any term of degree $m$ such that
$\phi_{2^k+1},\phi_m$ are relatively primes. This condition is best posible in the sense that if $h$ does not have such a term, then 
$\phi(x,y,z)$ is not more absolutely irreducible. Theorem 10 of the previous section and the comments at the begining of this subsection provides many examples for this condition to happen, almost for any odd number.

Until now, almost all the found families of Gold degree polynomials that fails to be exceptional APN are of the form $x^{2^k+1}+h(x)$ for an odd degree
of $h$. Theorem 5 of section 3 justifies this fact. In the next subsection we will discuss the case when $\deg (h)$ is an even number.

\section{\textcolor{black}{On the boundary of theorem 5.}}


Next,  we prove the version of theorem 6 for the even case.

\begin{thm} For $k \geq 2$, let $f(x)=x^{2^{2k}+1}+h(x)\in L[x]$
where $\deg (h)=2^{2k-1}+2$. Let
$h(x)=\sum_{j=0}^{2^{2k-1}+2}a_jx^j$. If there is
a nonzero coefficient $a_j$ such that $(\phi_{2^{2k}+1},\phi_j)=1$.
Then $\phi$ is absolutely irreducible.
\end{thm}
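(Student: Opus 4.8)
The plan is to follow the homogeneous-component strategy used for Theorems 5, 6 and 11. Suppose, for contradiction, that $\phi$ is reducible, say $\phi=PQ$ with $P,Q\in\overline{L}[x,y,z]$ non-constant. Since $\deg f=2^{2k}+1>\deg h=2^{2k-1}+2$, the top-degree homogeneous part of $\phi$ is $\phi_{2^{2k}+1}$, which by the Janwa--Wilson identity (5) equals $\prod_{\alpha\in\mathbb{F}_{2^{2k}}\setminus\mathbb{F}_2}(x+\alpha y+(\alpha+1)z)$; in particular it is squarefree. Decomposing $P=P_s+\dots+P_0$ and $Q=Q_t+\dots+Q_0$ into homogeneous parts gives $P_sQ_t=\phi_{2^{2k}+1}$, $s+t=2^{2k}-2$, and $\gcd(P_s,Q_t)=1$; we may assume $s\le t$, so $s\le 2^{2k-1}-1$. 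Now I would exploit the degree gap of $\phi$: every term of $h$ contributes to $\phi$ a homogeneous piece of degree at most $2^{2k-1}+2-3=2^{2k-1}-1$, so the homogeneous components of $\phi$ in degrees $2^{2k-1},\dots,2^{2k}-3$ all vanish. Equating the degree-$(2^{2k}-2-i)$ component of $PQ$ to zero for $i=1,2,\dots$ and using $\gcd(P_s,Q_t)=1$ at each step (exactly the recursion in the proof of Theorem 7, see \cite{DJ}), one obtains inductively $P_{s-i}=Q_{t-i}=0$ for $i=1,\dots,2^{2k-1}-2$, whenever the indices are non-negative.

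Case $s<t$. Then $s\le 2^{2k-1}-2$, so $P_{s-i}=0$ for every $i=1,\dots,s$, that is $P=P_s$ is homogeneous. Comparing the degree-$(j-3)$ homogeneous components of $\phi=P_sQ$ shows $P_s\mid a_j\phi_j$, hence $P_s\mid\phi_j$, for every $j$ with $a_j\neq0$. Applying this to the index $j$ furnished by the hypothesis and using $P_s\mid P_sQ_t=\phi_{2^{2k}+1}$, we get $P_s\mid\gcd(\phi_{2^{2k}+1},\phi_j)=1$, so $P_s$ is constant, contradicting that $P$ is a proper factor.

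Case $s=t=2^{2k-1}-1$. Here the recursion only forces $P_{s-1}=\dots=P_1=0$ and $Q_{t-1}=\dots=Q_1=0$, so $P=P_s+P_0$ and $Q=Q_t+Q_0$ with $P_0,Q_0\in\overline{L}$. Then $\phi=\phi_{2^{2k}+1}+(P_sQ_0+P_0Q_t)+P_0Q_0$ has homogeneous parts only in degrees $2^{2k}-2$, $2^{2k-1}-1$ and $0$; comparing with $\phi=\sum_j a_j\phi_j$, and recalling that $\phi_j=0$ precisely when $j$ is a power of $2$ (so those $a_j$ do not affect $\phi$), this forces $a_j=0$ for every $j$ other than $j=2^{2k-1}+2$ (with $a:=a_{2^{2k-1}+2}\neq0$ since $\deg h=2^{2k-1}+2$) and $j=3$. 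Thus $\phi=\phi_{2^{2k}+1}+a\,\phi_{2^{2k-1}+2}+b$ for some constant $b$, so the only index $j$ with $a_j\neq0$ and $\phi_j$ non-constant (the relevant reading, as in Theorems 5, 6, 9) is $j=2^{2k-1}+2$, and the hypothesis would force $\gcd(\phi_{2^{2k}+1},\phi_{2^{2k-1}+2})=1$. But $2^{2k-1}+2=2(2^{2k-2}+1)$, so from $x^{2^{2k-1}+2}=(x^{2^{2k-2}+1})^2$ and (5) one gets $\phi_{2^{2k-1}+2}=(x+y)(x+z)(y+z)\,\phi_{2^{2k-2}+1}^{\,2}$ with $\phi_{2^{2k-2}+1}=\prod_{\beta\in\mathbb{F}_{2^{2k-2}}\setminus\mathbb{F}_2}(x+\beta y+(\beta+1)z)$. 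Since $2k$ is even, $\mathbb{F}_4\subseteq\mathbb{F}_{2^{2k}}$ and $\mathbb{F}_4\subseteq\mathbb{F}_{2^{2k-2}}$, so the two linear forms $x+\beta y+(\beta+1)z$ with $\beta\in\mathbb{F}_4\setminus\mathbb{F}_2$ divide both $\phi_{2^{2k}+1}$ and $\phi_{2^{2k-1}+2}$; hence $\gcd(\phi_{2^{2k}+1},\phi_{2^{2k-1}+2})\neq1$, contradicting the hypothesis. Therefore $\phi$ is absolutely irreducible.

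The main obstacle I expect is the boundary case $s=t$: running the gap recursion tightly enough to pin down $P=P_s+P_0$, $Q=Q_t+Q_0$, and then extracting the precise shape of $h$. The conceptual crux is the final gcd computation, where the evenness of the Gold exponent $2k$ — via $\mathbb{F}_4\subseteq\mathbb{F}_{2^{2k}}$ — is exactly what makes $\gcd(\phi_{2^{2k}+1},\phi_{2^{2k-1}+2})\neq1$ and thus excludes the boundary case under our hypothesis; the analogous statement genuinely fails for odd Gold exponents, consistently with the splitting phenomenon recorded in Theorem 6.
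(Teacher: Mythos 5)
Your generic case ($s<t$, so the lower-degree factor is forced to be homogeneous and must divide $\phi_j$ for the witness $j$) is essentially the paper's first case and is fine. The genuine gap is in the boundary case $s=t=2^{2k-1}-1$. There you correctly pin down $P=P_s+P_0$, $Q=Q_t+Q_0$, hence $\phi=\phi_{2^{2k}+1}+a\,\phi_{2^{2k-1}+2}+b$, but your contradiction consists only of showing that $j=2^{2k-1}+2$ cannot serve as the coprimality witness, because $\gcd(\phi_{2^{2k}+1},\phi_{2^{2k-1}+2})\neq 1$. Under the theorem as stated the witness may perfectly well be $j=3$: if $a_3\neq 0$ then $\phi_3=1$ and $(\phi_{2^{2k}+1},\phi_3)=1$ holds trivially, the hypothesis is satisfied, and your argument excludes nothing --- you have not ruled out a factorization $\phi_{2^{2k}+1}+a\,\phi_{2^{2k-1}+2}+a_3=(P_s+P_0)(Q_t+Q_0)$ with $P_0Q_0\neq 0$. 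Adopting the ``relevant reading'' that requires $\phi_j$ non-constant changes the statement to a strictly weaker one; in particular it no longer covers cases the paper explicitly claims, such as $f(x)=x^{17}+a_{10}x^{10}+a_3x^3$ (the paper's example excepts only $x^{17}+a_{10}x^{10}+a_5x^5$), and under that reading the hypothesis is simply unsatisfiable in the boundary case, so your contradiction there is vacuous and the hard content of the theorem is never engaged.

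The paper closes exactly this hole unconditionally, without using the coprimality hypothesis in this subcase: if $P_0=0$ or $Q_0=0$, one factor is homogeneous and must divide every nonzero homogeneous component of $\phi$, which is impossible; if $P_0Q_0\neq 0$, restrict to the line $z=0$, $y=1$, where $P_sQ_t=\prod_{\alpha\in\mathbb{F}_{2^{2k}}\setminus\mathbb{F}_2}(x+\alpha)$ and $P_sQ_0+P_0Q_t=a\,x(x+1)\prod_{\alpha\in\mathbb{F}_{2^{2k-2}}\setminus\mathbb{F}_2}(x+\alpha)^2$, using $\phi_{2^{2k-1}+2}=(x+y)(x+z)(y+z)\phi_{2^{2k-2}+1}^2$. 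Since $\mathbb{F}_{2^{2k}}\cap\mathbb{F}_{2^{2k-2}}=\mathbb{F}_4$, evaluating at $\alpha_0\in\mathbb{F}_4\setminus\mathbb{F}_2$ gives $P_s(\alpha_0)=0$ or $Q_t(\alpha_0)=0$ (not both, by squarefreeness), while the second identity vanishes at $\alpha_0$ and then forces $P_0Q_t(\alpha_0)=0$ (respectively $Q_0P_s(\alpha_0)=0$), a contradiction. So the $\mathbb{F}_4$ containment you invoked is indeed the key fact, but it must be used to kill the boundary factorization itself via this evaluation argument, not merely to show that $\phi_{2^{2k-1}+2}$ fails the coprimality test; as written, your proof omits the decisive step.
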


\begin{proof}
Suppose by contradiction that $\phi$ is not absolutely irreducible. then
 $\phi(x,y,z)=P(x,y,z)Q(x,y,z)$, where
$P,Q$ are non-constants polynomials defined on some extension of $L$.
 Writing $P,Q$ as a sum of homogeneous
terms:
\begin{equation}
\sum_{j=0}^{2^{2k}+1}
a_j\phi_j(x,y,z)=(P_s+P_{s-1}+...+P_0)(Q_t+Q_{t-1}+...+Q_0)
\end{equation}
where $P_i,Q_i$ are zero or homogeneous of degree $i$,
$s+t=2^{2k}-2$. Assuming \textcolor{black}{without loss of generality} that $s\geq t$, then
$2^{2k}-2> s\geq \frac{2^{2k}-2}{2} \geq t>0$.\\
From the equation (14) we have that:
\begin{equation}
P_sQ_t=\phi_{2^{2k}+1}.
\end{equation}
Since by (5), $\phi_{2^{2k}+1}$ is equal to the product of different
linear factors, then $P_s$ and $Q_t$ are relatively primes. \textcolor{black}{Since $h(x)$
is assumed to have degree $2^{2k-1}+2$}, the homogeneous terms of degree $r$, for
$2^{2k-1}-1< r <2^{2k}-2$, are equal to zero. Then equating the
terms of degree $s+t-1$ gives $P_sQ_{t-1}+P_{s-1}Q_t=0$. Hence we
have that $P_s$ divides $P_{s-1}Q_t$ and this implies that $P_s$
divides $P_{s-1}$, since $P_s$ and $Q_t$ are relatively primes. We
conclude that $P_{s-1}=0$ since the degree of $P_{s-1}$ is less than
the degree of $P_{s}$. Then we also have that $Q_{t-1}=0$ as $P_s\neq 0$.\\
Similarly, equating the terms of degree $s+t-2, s+t-3,...,s+1$ we
get:
$$P_{s-2}=Q_{t-2}=0, P_{s-3}=Q_{t-3}=0,...,P_{s-(t-2)}=Q_1=0$$
The \textcolor{black}{(simplified) equation} of degree $s$ is:
\begin{equation}
P_sQ_0+P_{s-t}Q_t=a_{s+3}\phi_{s+3}
\end{equation}
\textcolor{black}{Lets} consider two cases to prove the absolute irreducibility of
$\phi(x,y,z)$.\\
\textbf{First case}: $s > t$.\\
Then $s > 2^{2k-1}-1$ and $\phi_{s+3}=0$. Then the equation (16)
becomes:
$$P_sQ_0+P_{s-t}Q_t=0.$$
Then, using the previous argument, $P_{s-t}=Q_0=0$. It means that
$Q=Q_t$ is homogeneous. By the equations (17), (15) we have that for
all $j$, $\phi_j(x,y,z)$ is divisible by $x+\alpha y+(\alpha+1)z$
for some $\alpha \in \mathbb{F}_{2^k}-\mathbb{F}_2$, which is a
contradiction by the
hypothesis of the theorem.\\
\textbf{Second case}: $s=t=2^{2k-1}-1$\\
For this case the equation (16) becomes:
\begin{equation}
P_sQ_0+P_0Q_t=a_{s+3}\phi_{2^{2k-1}+2}.
\end{equation}
If $P_0=0$ or $Q_0=0$, then we have that $Q=Q_t$ or $P=P_s$. Then by
similar arguments of the first case we have a contradiction. If both
$P_0, Q_0$ are different from zero, let us consider the intersection
of $\phi$ with the line $z=0,y=1$. Then the equation (15) and (17)
become:
\begin{equation}
P_sQ_t=\prod_{\alpha \in \mathbb{F}_{2^{2k}}-\mathbb{F}_2}(x+\alpha)
\end{equation}

\begin{equation}
P_sQ_0+P_0Q_t=a_{s+3}(x+1)(x)\prod_{\alpha \in
\mathbb{F}_{2^{2k-2}}-\mathbb{F}_2}(x+\alpha)^2
\end{equation}
This last equation comes from the fact that
$$\phi_{2^{2k-1}+2}=(x+y)(x+z)(y+z)\phi_{2^{2k-2}+1}^2$$
It is easy to show that
$\mathbb{F}_{2^{2k}}\cap\mathbb{F}_{2^{2k-2}}=\mathbb{F}_{2^2}$. Let
$x=\alpha_0\in \mathbb{F}_{4}$, $\alpha_0 \neq 0,1$. Then from (18)
we have that $P_s(\alpha_0)=0$ or $Q_t(\alpha_0)=0$.\\
If $P_s(\alpha_0)=0$, then $Q_t(\alpha_0) \neq 0$ (since $P_sQ_t$ is a product of
different linear factors) and from equation (19) we have $P_0Q_t(\alpha_0)=0$
that is a contradiction sice both $P_0,Q_t(\alpha_0)$ are different from zero.
The case $Q_t(\alpha_0)= 0$ is analogous. Therefore $\phi(x,y,z)$ is
absolutely irreducible.
\end{proof}
\noindent One of the families covered by this theorem is:\\
$f(x)=x^{17}+h(x)$ where $\deg(h)=10$, except the case
$f(x)=x^{17}+a_{10}x^{10}+a_5x^5$, $a_{10}\neq 0, a_5 \neq 0$.

\textcolor{black}{{\bf COMMENT:} Theorem 10 can be applied to Theorem 12 to prove absolute irreducibility of many families of this kind.}
\textcolor{black}{We also discuss the relatively prime case  the Kasami-Welch with other APN polynomials  in \cite{DJDESI},   and  in  \cite{IWSDA2015}.}


\end{document}